\newtheorem{theorem}{Theorem}[section]
\newtheorem{lemma}[theorem]{Lemma}
\newtheorem{corollary}[theorem]{Corollary}
\newtheorem{proposition}[theorem]{Proposition}
\theoremstyle{remark}
\theoremstyle{definition}
\newtheorem{definition}[theorem]{Definition}
\newtheorem{example}[theorem]{Example}
\numberwithin{equation}{section} \makeatother
\DeclareMathOperator{\Cdb}{{\mathbb C}}
\DeclareMathOperator{\Rdb}{{\mathbb R}}
\DeclareMathOperator{\Ndb}{{\mathbb N}}
\begin{document}
\title[Characterizations of
unital operator spaces and systems]{Metric characterizations of isometries and
of unital operator spaces and systems}

\author{David P. Blecher}
\address{Department of Mathematics, University of Houston, Houston, TX
77204-3008}
\email[David P. Blecher]{dblecher@math.uh.edu}
 \author{Matthew Neal}
\address{Department of Mathematics,
Denison University, Granville, OH 43023}
\email{nealm@denison.edu}
\date{May 13, 2008.  Revision of May 22, 2008.}
\thanks{*Blecher was partially supported by grant DMS 0800674 from
the National Science Foundation. Neal was supported by Denison
University.}

\begin{abstract}
We give some new characterizations of unitaries, isometries, unital
operator spaces, unital function spaces, operator systems,
$C^*$-algebras, and related objects.   These characterizations only
employ the vector space and operator space structure.
\end{abstract}

\maketitle

\section{Introduction}

We give some new characterizations, of unitaries, isometries, unital
operator spaces, unital function spaces, operator systems,
$C^*$-algebras,  and related objects.  Several characterizations of
these objects are already known; see e.g.\
\cite[Theorem 9.5.16]{Pal}, \cite{AW}, the discussion on p.\
316 of \cite{BEZ}, \cite{NR}, and \cite{HN}.
One difference between our paper and these cited references, is that
our results only use the vector space structure
of the space and its matrix norms, in the spirit of Ruan's matrix norm
characterization of operator spaces \cite{Ru}, whereas the other
cited references
use criteria involving maps or functionals on the space.
   Our first main
result characterizes unital operator spaces, that is, subspaces of a
unital $C^*$-algebra containing the identity.
More abstractly, a
unital operator space is a pair $(X,u)$ consisting of an operator
space $X$ containing a fixed element $u$ such that there exists a
Hilbert space $H$ and a complete isometry $T : X \to B(H)$ with
$T(u) = I_H$. Such spaces have played a significant role since the
birth of operator space theory in \cite{Ar}.  Indeed, although
 the latter paper is mostly concerned with
unital operator algebras, it was remarked in several places there
that many of the results are valid for unital operator spaces.  The
text \cite{BLM} also greatly emphasizes unital operator spaces. The
abstract characterization of these objects has been missing until
recently, and we had wondered about this over the years; the
following is our answer to this question. Our result complements
Ruan's characterization of operator spaces \cite{Ru}, the
Blecher-Ruan-Sinclair abstract characterization of operator algebras
\cite{BRS}, and a host of other theorems of this type (see e.g.\
\cite{BLM,Pau}). To state it, we will write $u_n$ for the diagonal
matrix in $M_n(X)$ with $u$ in each diagonal entry.

\begin{theorem} \label{main}  If $u$ is an element in an operator space  $X$,
then $(X,u)$ is a unital operator space if and only if
$$\Vert [ u_n  \; \; x ] \Vert = \left| \left|   \left[ \begin{array}{cl} u_n
 \\ x \end{array} \right]
 \right| \right| = \sqrt{2} ,$$
for all $x \in M_n(X)$ of norm $1$, and all $n \in \Ndb$.
 \end{theorem}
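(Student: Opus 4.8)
The plan is to prove the two implications separately, the forward (necessity) direction being a short $C^*$-computation and the converse carrying essentially all of the work. For necessity I would assume $X\subseteq B(H)$ completely isometrically with $u=I_H$, so that $u_n=I_{H^{(n)}}$, and use the standard $C^*$-identities $\|[a\ b]\|^2=\|aa^*+bb^*\|$ and (for a column) $\|a^*a+b^*b\|$. For a unit $x\in M_n(X)$ these give $\|[u_n\ x]\|^2=\|I+xx^*\|$ and the column analogue $\|I+x^*x\|$. Since $0\le xx^*\le\|x\|^2 I=I$ with $\|xx^*\|=1$, the positive operator $I+xx^*$ has norm $1+\|x\|^2=2$, and likewise for $x^*x$; hence both quantities equal $\sqrt2$.

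For sufficiency I would first fix any completely isometric embedding $X\subseteq B(H)$ (Ruan's theorem) and record that $\|u\|=1$: applying the row hypothesis with $n=1$ and $x=u/\|u\|$ gives $\|uu^*+uu^*/\|u\|^2\|=\|u\|^2+1$, which must equal $2$. Rewriting the hypotheses through the $C^*$-identities above, they become
$$\|u_n u_n^* + x x^*\| = 2 \quad\text{and}\quad \|u_n^* u_n + x^* x\| = 2$$
for every unit $x\in M_n(X)$ and every $n$, where $u_nu_n^*=(uu^*)_n$ and $u_n^*u_n=(u^*u)_n$ are the diagonal ampliations. The structural heart of the matter is then to show that $uu^*$ and $u^*u$ play the role of identities for the intrinsic operator-space structure, so that $u$ is a ``ternary unitary'' and may be rechosen as $I$. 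I would work inside the ternary envelope (noncommutative Shilov boundary) $\mathcal{T}(X)$, realized as a TRO with its associated left and right $C^*$-algebras, in which $uu^*$ and $u^*u$ are positive norm-one elements of the two corners. Passing to the bidual (whose status as a unital operator space is equivalent to that of $X$, since the all-$n$ matrix equalities survive weak* limits) lets me replace approximate maximizing vectors by genuine spectral/extreme data. The equality $\|(uu^*)_n+xx^*\|=2$ says that $(uu^*)_n$ and $xx^*$ attain their norms on common directions; running $x$ over all of $M_n(X)$ and exploiting the ampliations should force $uu^*$ to act as a left identity on $X$ inside $\mathcal{T}(X)$, and symmetrically $u^*u$ to act as a right identity. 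Consequently $u$ is a two-sided unit with $uu^*u=u$, and a representation of $\mathcal{T}(X)$ carrying this unit to $I_{H'}$ exhibits a complete isometry $X\to B(H')$ with $u\mapsto I_{H'}$, as required.

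The main obstacle, and the step I expect to require the most care, is precisely this passage from the norm equalities to the conclusion that $uu^*$ and $u^*u$ are genuine two-sided identities for the ternary structure, and that they do so \emph{completely} rather than merely isometrically. The delicate point is that the hypotheses constrain only elements $x$ lying in $X$ (and its matrix levels), so one cannot freely probe arbitrary directions; the rigidity has to be extracted from the fact that equality $\sqrt2$ (not just the inequality $\le\sqrt2$, which always holds) is forced at \emph{every} matrix level simultaneously. It is exactly here that the all-$n$ form of the statement is indispensable, since the uniform control of the $M_m$-norms is what upgrades the scalar-level alignment of norm-attaining directions to the completely isometric, unit-preserving representation that defines a unital operator space.
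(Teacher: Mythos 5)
Your necessity direction is fine and agrees with the paper's, but the sufficiency direction has a genuine gap at exactly the step you flag, and the mechanism you propose there does not suffice. The per-$x$ information extractable from the hypothesis is a norm equality, not an algebraic identity: for a positive contraction $a$, the condition $\Vert a x \Vert = \Vert x \Vert$ does not imply $ax = x$ (take $a = {\rm diag}(1,\tfrac12)$ and $x = I_2$), so aligning norm-attaining states or directions for $(uu^*)_n$ and $xx^*$, one $x$ at a time, can at best yield that left multiplication by $uu^*$ is (completely) isometric on $X$ --- which is where the paper also lands, and which is strictly weaker than $uu^*$ acting as a left identity. Your bidual detour does not repair this: weak* lower semicontinuity of the norm preserves the inequality $\leq \sqrt2$ under weak* limits, but the equality $=\sqrt2$ (a lower bound on the limit) need not survive, so it is not even clear that the hypotheses transfer to $X^{**}$. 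As written, the ``structural heart'' of your argument is an unproven aspiration.

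The paper (in Theorems \ref{utro} and \ref{uosp}) closes this gap with two specific tools absent from your sketch. First, a triangle-inequality trick: by the $C^*$-identity, $\Vert [u_n \; x]\Vert^2$ equals the norm of the matrix with entries $u_n^*u_n, u_n^*x, x^*u_n, x^*x$, and splitting this into its diagonal and off-diagonal parts gives $2 \leq 1 + \Vert u_n^* x \Vert \leq 2$, forcing $\Vert u_n^* x \Vert = \Vert x \Vert$, hence $\Vert u_n u_n^* x \Vert = \Vert x \Vert$, at every matrix level; thus $L_{uu^*}$ is completely isometric on $X$ inside ${\mathcal T}(X)$. Second --- and this is the decisive point that answers your own worry about only being able to probe with elements of $X$ --- the essentiality (rigidity) property of the ternary envelope \cite[8.3.12 (3) and 4.3.6]{BLM}: a complete contraction on ${\mathcal T}(X)$ that is completely isometric on $X$ must be isometric on all of ${\mathcal T}(X)$. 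Once $L_{uu^*}$ is isometric globally, the $C^*$-module theorem that surjective isometric module maps are unitary \cite[Corollary 8.1.8]{BLM} gives $\langle uu^*z , z\rangle = \langle z, z \rangle$ and hence $uu^*z = z$ for all $z \in {\mathcal T}(X)$; the column hypothesis symmetrically gives $zu^*u = z$, so $u$ is a unitary in the TRO sense, and $x \mapsto u^*x$ is a unital complete isometry into the $C^*$-algebra $u^*{\mathcal T}(X)$ (Lemma \ref{note}). So your skeleton (work in ${\mathcal T}(X)$, show $uu^*$ and $u^*u$ are one-sided units) is the right one, but without the rigidity of the ternary envelope and the module-map theorem the central implication remains unproven.
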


We call the element $u$ in the last theorem  a {\em unitary in} $X$,
or a {\em unit} or {\em identity} for $X$.  We also find the
matching abstract characterization of function spaces containing
constants.  Namely a pair $(X,g)$, where $X$ is now a Banach space,
is  a unital function space iff $\sup \{ \Vert s f + t g \Vert : s,
t \in \Cdb, |s|^2 + |t|^2 = 1 \} = \sqrt{2}$ for every $f \in X$
with $\Vert f  \Vert = 1$.    This is presented in Section 5.
 In section 2 of our paper we also characterize unitaries and isometries
in a $C^*$-algebra, etc. In Section 3 we characterize and study
operator systems, that is selfadjoint subspaces of a unital
$C^*$-algebra containing the identity.   More abstractly, an
operator system is a unital operator space $(X,u)$ for which there
exists a linear complete isometry $T : X \to B(H)$ with $T(u) = I_H$
and $T(X)$ selfadjoint. Theorem \ref{main} leads to new intrinsic
characterizations of operator systems.
For example:

\begin{theorem} \label{main3}  A unital operator space $(X,u)$
(characterized above) is an operator system iff for all $x \in {\rm
Ball}(X)$
there exists an element $y \in {\rm Ball}(X)$ with
$\left| \left|   \left[ \begin{array}{ccl} tu & x \\ y & tu
\end{array} \right]
 \right| \right|   \leq \sqrt{t^2 + 1}$ for all $t \in \Rdb$.
  It is also equivalent to:
 $$\inf \Bigl\{ \left| \left|
 \left[ \begin{array}{ccl} tu & x \\ y & tu
 \end{array} \right]
 \right| \right|
  \; : \; y \in {\rm
Ball}(X) \Bigr\} \leq \sqrt{t^2 + 1}$$  for all $t \in \Rdb$
and $x \in {\rm
Ball}(X)$.
\end{theorem}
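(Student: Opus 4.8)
The plan is to fix, once and for all, a complete isometry $T : X \to B(H)$ with $T(u)=I_H$ (available since $(X,u)$ is a unital operator space), and to identify $X$ with $T(X) \subseteq B(H)$, so that $u = I_H$ and $x \mapsto x^*$ denotes the adjoint taken in $B(H)$. Since both displayed conditions are expressed purely in terms of matrix norms, they are intrinsic and unchanged under this identification. I will prove the cycle (operator system) $\Rightarrow$ (first condition) $\Rightarrow$ (second condition) $\Rightarrow$ (operator system); the middle implication is immediate, since a single $y$ that works for all $t$ in particular witnesses the infimum bound for each $t$. Throughout write $M_t = \left[\begin{array}{cc} tu & x \\ y & tu \end{array}\right]$.

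For the first implication, suppose $X$ is an operator system, so in the chosen embedding $T(X)$ is selfadjoint; given $x \in \mathrm{Ball}(X)$ I would simply take $y = -x^* \in \mathrm{Ball}(X)$. A direct computation gives $M_t^* M_t = \left[\begin{array}{cc} t^2 I + x x^* & 0 \\ 0 & t^2 I + x^* x \end{array}\right]$, the off-diagonal entries cancelling precisely because $y = -x^*$; hence $\|M_t\|^2 = t^2 + \max(\|xx^*\|,\|x^*x\|) = t^2 + \|x\|^2 \le t^2 + 1$, which is the first condition.

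The heart of the matter is the last implication, and it is where I expect the real work. Assume the infimum condition. For a fixed $x \in \mathrm{Ball}(X)$ and each $t>0$, choose $y = y_t \in \mathrm{Ball}(X)$ with $\|M_t\|^2 \le t^2 + 1 + \delta_t$, where $\delta_t \to 0$ as $t \to \infty$ (possible since the infimum is $\le \sqrt{t^2+1}$). The key algebraic observation is that $M_t^* M_t = t^2 I + t A_t + B_t$, where $A_t = \left[\begin{array}{cc} 0 & x + y_t^* \\ x^* + y_t & 0 \end{array}\right]$ is selfadjoint and $B_t = \left[\begin{array}{cc} y_t^* y_t & 0 \\ 0 & x^* x \end{array}\right] \ge 0$. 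The crucial point is that the cross term carries a factor of $t$: testing the positive operator $M_t^* M_t$ against unit vectors and discarding the nonnegative contribution of $B_t$ yields $t\,\langle A_t \xi, \xi\rangle \le 1 + \delta_t$ for every unit vector $\xi$. Since $A_t$ has the off-diagonal form above, $\sup_{\|\xi\|=1} \langle A_t \xi, \xi\rangle = \|x + y_t^*\|$, so $\|x + y_t^*\| \le (1+\delta_t)/t$.

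Letting $t \to \infty$ forces $y_t^* \to -x$, i.e.\ $y_t \to -x^*$ in norm; as $X$ is complete and each $y_t \in X$, the limit $-x^*$ lies in $X$, whence $x^* \in X$. As $x \in \mathrm{Ball}(X)$ was arbitrary (and by scaling, all of $X$), the embedded copy $T(X)$ is selfadjoint, so $(X,u)$ is an operator system. The main obstacle, then, is not any single hard estimate but the correct reading of the infimum condition: because $y$ is allowed to depend on $t$, one cannot conclude $y = -x^*$ outright (as one can from the first, uniform condition), and must instead extract $-x^*$ as a norm limit, which is exactly what the $t$-scaling of the cross term makes possible. A secondary point to be careful about is the identity $\sup_{\|\xi\|=1} \langle A_t\xi,\xi\rangle = \|x+y_t^*\|$; this uses that the spectrum of the off-diagonal selfadjoint operator $A_t$ is symmetric about the origin with radius $\|x+y_t^*\|$.
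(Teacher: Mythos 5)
Your proposal is correct and follows essentially the same route as the paper's proof: take $y=-x^*$ for the easy direction, and for the converse pick near-optimal $y_t$, expand the square so that the off-diagonal cross term carries the factor $t$, and use the diagonal-sign symmetry (your spectral-symmetry observation about $A_t$ is exactly the paper's trick of noting the norm is unchanged when the diagonal entries are multiplied by $-1$) to get $\Vert y_t + x^* \Vert = O(1/t)$, hence $y_t \to -x^*$ in norm and $X = X^*$. The only cosmetic differences are that you test $M_t^* M_t$ against vector states where the paper applies general states on $M_2(B(H))$ to the analogous expansion, which changes nothing of substance.
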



Similarly, we obtain new `matrix norm' characterizations of
$C^*$-algebras. For example:

\begin{theorem} \label{main4prime}  A unital operator space $(X,1)$
(characterized above) possesses a product with respect to which it
is isomorphic to a $C^*$-algebra via a unital complete isometry, if
and only if $X$ is spanned by the unitaries in $X$ (characterized in
Section {\rm 2}) and  for every unitary $v$ in $X$ we have
$$\inf \Bigl\{ \left| \left|
 \left[ \begin{array}{ccl} t1 & y \\ z & tv
 \end{array} \right]
 \right| \right|
  \; : \; z \in {\rm
Ball}(X) \Bigr\} \leq \sqrt{t^2 + 1}$$  for all $t \in \Rdb$ and $y
\in {\rm Ball}(X)$.
 \end{theorem}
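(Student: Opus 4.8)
The plan is to prove Theorem \ref{main4prime} by combining the unital operator space structure with an Arveson–Wittstock–style extension or a characterization of which products make $X$ into a $C^*$-algebra. The key conceptual input is that a unital complete isometry onto a $C^*$-algebra is essentially forced once we know the multiplicative structure on the unitaries. First I would recall that by Theorem \ref{main}, the pair $(X,1)$ embeds completely isometrically and unitally into some $B(H)$, so I may regard $X \subseteq B(H)$ with $1 = I_H$. The role of the spanning hypothesis is to reduce everything to the behaviour of the unitaries: if $X = \operatorname{span}$ of its unitaries, then a product is determined on a spanning set, so I expect the strategy to be to show that each unitary $v \in X$ is an honest unitary operator in some $C^*$-algebra containing $X$, and then to verify that $X$ is closed under the resulting operations.

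\medskip

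The central step is to interpret the $2\times 2$ matrix inf-condition. I would first treat the special case $v = 1$, which by Theorem \ref{main3} (with the substitution $x = y$, $y = z$) is exactly the operator-system condition; so the hypothesis applied to $v = 1$ already guarantees that $X$ is an operator system, hence selfadjoint inside $B(H)$ after a suitable embedding. Thus I would aim to show: (i) $X$ is an operator system via $1$, and (ii) each unitary $v$ in $X$ is a unitary operator, i.e. $v^*v = vv^* = 1$ in the enveloping $C^*$-algebra. For (ii), the inf-condition for general $v$ should be read as asserting that $v$ behaves like an isometry relative to $1$ in the precise quantitative sense captured by the $\sqrt{t^2+1}$ bound; I would connect this to the isometry/unitary characterizations promised in Section 2, showing that the stated matrix-norm inequality forces $v$ to be a unitary element of the $C^*$-envelope $C^*_e(X)$. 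Since $X$ is spanned by such unitaries and contains $1$, and since products of unitaries in $C^*_e(X)$ can be written back in terms of elements of $X$ under the selfadjointness from (i), I would then argue that $X$ is a unital $C^*$-subalgebra of $C^*_e(X)$, giving the desired product.

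\medskip

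For the converse direction, if $X$ is already a $C^*$-algebra via a unital complete isometry, then $X$ is spanned by its unitaries (a standard fact: every element of a unital $C^*$-algebra is a finite linear combination of unitaries), and for a unitary $v$ one computes directly, taking $z = -v^*y^*$ or an analogous optimal choice, that the $2 \times 2$ matrix norm equals $\sqrt{t^2 + \|y\|^2} \le \sqrt{t^2+1}$; this is the routine computation I would not belabour, amounting to evaluating a $2\times 2$ operator matrix with unitary corners.

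\medskip

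The main obstacle I anticipate is the rigidity argument that upgrades the inf-inequality for a general unitary $v$ into the algebraic statement that $v^*v = 1$, and simultaneously forces the candidate product to be associative and $*$-compatible across different unitaries. It is one thing to know each $v$ is a unitary operator in $C^*_e(X)$; it is another to know that the span $X$ is actually closed under multiplication in $C^*_e(X)$. I expect this to require a careful use of the operator-system structure from (i) together with the interplay between the $v=1$ and general-$v$ cases, perhaps invoking that the multiplier/noncommutative Shilov boundary structure of $X$ recognizes exactly these unitaries; reconciling the extremal choices of $z$ for different $y$ and $v$ so that they cohere into a single associative $*$-algebra product is the delicate point.
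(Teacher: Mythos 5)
There is a genuine gap at exactly the point you flag as ``the delicate point'': the step from the inf-condition to multiplicative closure of $X$ is the heart of the proof, and your proposal leaves it unresolved. Moreover, you misallocate the role of the inf-condition. In the theorem, $v$ is \emph{hypothesized} to be a unitary in $X$ in the sense of Section 2, and hence (by Lemma \ref{note}) is already a genuine unitary in the $C^*$-envelope $C^*_e(X)$; there is nothing to ``upgrade'' there, and the inf-condition is not a unitarity statement at all. Its actual function is to \emph{retrieve the product} $vy^*$ inside $X$: representing $C^*_e(X)$ faithfully and unitally on $H$, so $1 = I_H$ and each unitary $v\in X$ is a unitary on $H$, one picks for each $n$ an element $z_n \in {\rm Ball}(X)$ with
$$\left\Vert \left[ \begin{array}{cc} nI & y \\ z_n & nv \end{array} \right] \right\Vert^2 \; \leq \; n^2 + 1 + \frac{1}{n},$$
squares via the $C^*$-identity, and applies states on $M_2(B(H))$ exactly as in the proof of Theorem \ref{main3} (here unitarity of $v$ is used, since $vv^* = v^*v = 1$ makes the diagonal contribute exactly $n^2$ plus bounded terms) to conclude $\Vert z_n + vy^* \Vert \leq 1/n + 1/n^2$. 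Hence $-vy^*$ is a norm limit of elements of $X$, so $vy^* \in X$; this is the content of Lemma \ref{mult} in quantitative form. Your worry about ``reconciling the extremal choices of $z$ for different $y$ and $v$ so that they cohere into a single associative $*$-algebra product'' is a red herring: no coherence argument is needed, because the limit identifies $z$ intrinsically as $-vy^*$, computed in $C^*_e(X)$, and once one knows $vX^* \subset X$ for every unitary $v$, combined with $X = X^*$ (your step (i), which correctly matches the paper: take $v = 1$ and invoke Theorem \ref{main3}) and the spanning hypothesis, $X$ is a selfadjoint subspace of $C^*_e(X)$ closed under multiplication, i.e.\ a $C^*$-subalgebra --- associativity and the $*$-compatibility are inherited for free.

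Two smaller points. In the easy direction the optimal choice is $z = -vy^*$, not $z = -v^*y^*$: with $z = -vy^*$ the matrix $a$ satisfies $aa^* = {\rm diag}(t^2 + yy^*,\; vy^*yv^* + t^2)$, giving norm exactly $\sqrt{t^2 + \Vert y \Vert^2}$, whereas $z = -v^*y^*$ leaves nonzero off-diagonal terms $t(yv^* - yv)$ in general. Finally, your suggested detour through multiplier or noncommutative Shilov boundary machinery is unnecessary; the paper's argument is entirely elementary once the state-pushing computation from Theorem \ref{main3} is in hand, and your proposal, as written, never supplies a substitute for that computation.
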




 Section 4 discusses changing the identity in an operator system;
also we mention a  connection between
 our theory and the famous
characterization due to Choi and Effros of operator systems, in
terms of an order unit \cite{CE,Pau}.

We now turn to precise definitions and notation.    Any unexplained
terms below can be found in \cite{BLM}, or any of the other recent
books on operator spaces.  All vector spaces are over the complex
field $\Cdb$.  The letters $H, K$ are usually reserved for Hilbert
spaces.  A given cone in a space $X$ will sometimes be written as
$X_+$, and
 $X_{\rm sa} = \{ x \in X : x = x^* \}$ assuming that there
is an involution $*$ around.  All normed (or operator) spaces are
assumed to be complete.
 A (resp.\ complete) {\em order isomorphism} is a (resp.\
 completely) positive linear bijection $T$ such that
$T^{-1}$ is (resp.\ completely) positive.  It is well known that a
surjective complete isometry $T$ between operator systems with $T(1)
= 1$ is a complete order isomorphism (see e.g.\ 1.3.3 in
\cite{BLM}). Thus we will not be too concerned with
positivity issues in this paper.

A {\em TRO} (ternary ring of
operators) is a closed
subspace $Z$ of a C*-algebra, or of $B(K,H)$,  such that $Z Z^* Z \subset Z$.
We refer to e.g.\ \cite{Ham,BLM} for the basic theory of TROs.
A {\em ternary morphism} on a TRO $Z$ is a linear map $T$ such that
$T(x y^* z) = T(x) T(y)^* T(z)$ for all $x, y, z \in Z$.
We write $Z Z^*$ for the closure of the linear span of
products $z w^*$ with $z, w \in Z$, and similarly for $Z^* Z$. These are
$C^*$-algebras.
 The {\em ternary envelope}
of an operator space  $X$ is a pair $({\mathcal T}(X),j)$ consisting of
a TRO ${\mathcal T}(X)$ and a  completely isometric linear map $j : X
\to {\mathcal T}(X)$, such that
${\mathcal T}(X)$ is generated by $j(X)$ as a TRO (that is, there is no closed
subTRO containing $j(X)$), and which has the following
property: given any completely isometric
linear map $i$ from $X$ into a TRO $Z$ which is
generated by $i(X)$, there exists a (necessarily unique and surjective)
ternary morphism $\theta : Z \to {\mathcal T}(X)$ such that
$\theta \circ i = j$.
If $(X,u)$ is a unital operator space then the ternary envelope
may be taken to be the {\em $C^*$-envelope} of e.g.\ \cite[Section 4.3]{BLM};
this is a $C^*$-algebra $C^*_e(X)$ with identity $u$.
If $X$ is an operator system then $X$ is  a selfadjoint subspace
of $C^*_e(X)$.

We remark that the criteria
appearing in the characterizations in \cite{HN}
have nothing in common with those in our results,  nor
do the methods of proof.
For example,
their criteria for unital operator spaces or systems
are in terms of completely contractive unital matrix valued
maps $\varphi$ on the space, and
they rely on there being `sufficiently many' of such maps.

\section{Characterization of isometries and unital spaces}

Clearly, the definition of a unital operator space $(X,u)$ above is
unchanged if we replace $B(H)$ by a unital $C^*$-algebra, or if we
replace $I_H$ with any unitary.   Thus the element $u$ is called a
unitary in $X$.  Similarly, we say that an element $v$ in an
operator space $X$ is an {\em isometry} (resp.\ {\em coisometry}) {\em in}
$X$ if there exists a  complete isometry $T$ from $X$ into
 $B(K,H)$, for Hilbert spaces $H$ and $K$,
 with $T(v)$ an isometry (resp.\ coisometry).

A {\em unitary} in a TRO $Z$ is an element $u \in Z$ with $u u^* z =
z$ and $z u^* u = z$ for all $z \in Z$. We say that $u$ is an {\em
coisometry} (resp.\ {\em isometry})
 if just the first (resp.\ second) condition holds.
If $Z$ is  a $C^*$-algebra it is easy to see that these coincide with
the usual definition of unitary, coisometry, and isometry.
We will  soon see that these also coincide with the operator space
definitions above.

\begin{theorem} \label{utro}  An element $u$ in a
$C^*$-algebra or TRO $A$ is a unitary if and only if $\Vert [ u \;
\; x ] \Vert^2 = 1 + \Vert x \Vert^2$ and  $\Vert [ u \; \;  x ]^t
\Vert^2 = 1 + \Vert x \Vert^2$, for all $x \in A$.  Indeed, it
suffices to consider norm one elements $x$ here. Similarly, $u$ is a
coisometry (resp.\ isometry) iff the first (resp.\ second) of these
norm conditions holds for all $x \in A$.
 \end{theorem}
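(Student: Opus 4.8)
The plan is to translate the two matrix-norm conditions into statements inside the $C^*$-algebras $AA^*$ and $A^*A$, and then to recognize when a positive element acts as an identity. Since a $C^*$-algebra is a TRO for which the ternary notions of unitary/isometry/coisometry agree with the usual ones, I would work throughout with a TRO $A \subseteq B(K,H)$. The first observation is that $\Vert [u \; x] \Vert^2 = \Vert [u\;x][u\;x]^* \Vert = \Vert uu^* + xx^* \Vert$, computed in $AA^*$, and $\Vert [u\;x]^t \Vert^2 = \Vert u^*u + x^*x \Vert$, computed in $A^*A$. Taking $x = u/\Vert u \Vert$ in the first identity forces $\Vert u \Vert = 1$, so $p := uu^*$ and $q := u^*u$ are positive elements of norm one. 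Because the coisometry condition $uu^* z = z\ (z \in A)$ is precisely the assertion that $p$ is a (two-sided, as $p=p^*$) identity for $AA^*$, and similarly $q$ an identity for $A^*A$ in the isometry case, the whole theorem reduces to: for $p = uu^*$ positive of norm one, one has $\Vert p + xx^* \Vert = 1 + \Vert xx^* \Vert$ for all (norm-one) $x \in A$ if and only if $p$ is the identity of $AA^*$. The unitary statement is the conjunction of the two.

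The easy direction is $(\Leftarrow)$: if $p$ is the identity of $AA^*$ then $p + xx^* = 1 + xx^*$ is positive with norm $1 + \Vert xx^* \Vert = 1 + \Vert x \Vert^2$, by the spectral description of the norm of a positive element; the isometry and unitary implications are identical.

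For the substantive direction $(\Rightarrow)$ I would proceed in two steps. First, restricting to $x = f(p)u$, where $f \geq 0$ is continuous on $[0,1]$ with $f(0)=0$ (so $f(p) \in AA^*$ and hence $x = f(p)u \in (AA^*)A \subseteq A$), I compute $xx^* = p\,f(p)^2$ and feed the condition through the continuous functional calculus: if some $\lambda \in \sigma(p)$ satisfied $0 < \lambda < 1$, then choosing $f$ peaked at $\lambda$, supported in a small subinterval of $(0,1)$, and normalized so that $\Vert xx^* \Vert = \max_t t f(t)^2 = 1$, would give $\Vert p + xx^* \Vert = \max_t (t + t f(t)^2) < 2 = 1 + \Vert xx^* \Vert$, a contradiction. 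Since $\Vert p \Vert = 1$ forces $1 \in \sigma(p)$, this yields $\sigma(p) \subseteq \{0,1\}$, so $p$ is a projection. Second, I rule out a proper projection: if $p \neq 1$ in the multiplier sense, then $(1-p)z \neq 0$ for some $z \in A$, and $x := (1-p)z$ lies in $A$ (it equals $z - pz$ with $pz \in AA^*A \subseteq A$) and satisfies $xx^* = (1-p)zz^*(1-p)$, which is orthogonal to $p$. For orthogonal positive elements $\Vert p + xx^* \Vert = \max(\Vert p \Vert, \Vert xx^* \Vert)$, which after normalizing $\Vert x \Vert = 1$ equals $1$, again contradicting the required value $2$. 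Hence $p$ is the identity of $AA^*$, i.e.\ $u$ is a coisometry. The isometry statement is the same argument applied to $q = u^*u$ in $A^*A$, using the test elements $u\,g(q)$ and $z(1-q)$ and the right-module relation $A(A^*A) \subseteq A$; the unitary statement follows by combining the two. The reduction to norm-one $x$ is automatic, as both test families can be normalized to $\Vert x \Vert = 1$.

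The main obstacle is the second step of $(\Rightarrow)$: the functional calculus in $p$ alone controls only $\sigma(p)$ and cannot detect a nontrivial kernel of $p$ acting on $A$, so it leaves open the possibility that $p$ is a proper projection. Overcoming this is exactly where the ternary module structure $AA^* \cdot A \subseteq A$ is essential, since it lets me manufacture an element $x \in A$ with $xx^*$ orthogonal to $p$ and thereby violate the norm identity.
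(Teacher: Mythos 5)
Your proposal is correct, but it takes a genuinely different route from the paper's. Both arguments open identically: the $C^*$-identity converts the row and column conditions into $\Vert uu^* + xx^*\Vert = 1 + \Vert x\Vert^2$ and $\Vert u^*u + x^*x\Vert = 1 + \Vert x\Vert^2$, and testing against (a multiple of) $u$ itself gives $\Vert u \Vert = 1$. After that the paper never examines the spectrum of $uu^*$: it estimates $\Vert [u \;\; x]^*[u\;\;x]\Vert$ by splitting that $2\times 2$ matrix into its diagonal and off-diagonal parts to deduce $\Vert u^* x \Vert = \Vert x \Vert$ for all $x \in A$, so that left multiplication $L_{u^*}$ is an isometric right-module map of $A$ onto the submodule $u^*A$, and then invokes $C^*$-module theory (\cite[Corollary 8.1.8]{BLM}: isometric surjective module maps preserve inner products) to get $x^* u u^* x = x^* x$, whence $uu^* x = x$ since $uu^* \leq 1$; it also notes the JB$^*$-triple functional calculus (showing $u$ is a partial isometry) as an alternative. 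You instead work entirely inside the $C^*$-algebra $AA^*$ with hand-built test elements: the bump functions $f(p)u$ (legitimate elements of $A$ by $(AA^*)A \subseteq A$, and your normalization $\max_t t f(t)^2 = 1$ checks out) force $\sigma(p) \subseteq \{0,1\}$, and the orthogonal element $z - pz$ then forces $pz = z$ for all $z \in A$, which is exactly the coisometry condition, so no separate argument identifying $p$ with the identity of $AA^*$ is even needed at the end. Your route buys elementarity and self-containment --- only the $C^*$-identity, continuous functional calculus, and the norm formula for a sum of orthogonal positive elements are used, with no citation of module or triple theory. What the paper's route buys, besides brevity, is reusability: its intermediate step ($\Vert u^*x\Vert = \Vert x \Vert$, equivalently that left multiplication by $uu^*$ is isometric) is precisely what is recycled in the proof of the operator space version, Theorem \ref{uosp}, via the essential property of the ternary envelope, whereas your test elements $f(p)u$ and $z - pz$ need not lie in a general subspace $X$ of a TRO, so your argument is confined to the $C^*$-algebra/TRO setting --- which is all the present theorem requires.
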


\begin{proof}   We just prove the coisometry assertion, the others
following by symmetry.
By the $C^*$-identity, $\Vert [ u \; \; x ] \Vert^2
 = \Vert u u^* + x x^* \Vert$.  If $u$ is a
coisometry then this equals $1 + \Vert x \Vert^2$. Conversely,
suppose that  $\Vert [ u \; \; x ] \Vert^2 = 2$ for all $x \in A$ of
norm $1$.   It is easy to see that this implies $\Vert u \Vert = 1$.
By the $C^*$-identity,  the norm of the matrix with entries $u^* u,
u^* x, x^* u, x^* x$, is also $2$.  By writing this matrix as a
diagonal matrix plus another matrix, the last norm is $\leq \max \{
\Vert u^* u \Vert , \Vert x^* x  \Vert \} + \Vert u^* x \Vert = 1 +
\Vert u^* x \Vert \leq  2$. Hence $ \Vert u^* x \Vert  = 1$. That $u
u^* x = x$ can be deduced now from the well known functional
calculus in JB$^*$-triples (see e.g.\ p.\ 238 in \cite{BN1} for an
exposition of the TRO case of this), which shows that $u$ is a
partial isometry, hence $\Vert u u^* x - x \Vert = 0$. Instead we
will use some well known and elementary facts from $C^*$-module
theory. By the above, the operator $L_{u^*}$ of left multiplication
by $u^*$ is an isometric right module map from $A$ onto the closed
right ideal (submodule) $u^* A$ of $A^* A$. However every isometric
$C^*$-module map is `unitary' (see e.g.\ \cite[Corollary
8.1.8]{BLM}). That is, in $C^*$-module language,
$$\langle u u^* x , x \rangle = \langle u^* x , u^* x \rangle =
\langle x ,  x \rangle = x^* x , \qquad x \in A . $$ Since $u u^*
\leq 1$, it follows easily that $\Vert (1 - u u^*) x \Vert^2 = 0$ as
desired.
     \end{proof}

\begin{lemma} \label{note}
Let $u$ be an element in an operator space $X$.
The following are equivalent:
\begin{itemize}
\item [(i)]  $u$ is a unitary (resp.\ isometry, coisometry) in $X$.
\item [(ii)]  There exists a
TRO $Z$ containing $X$  completely  isometrically,
such that $u$ is a unitary (resp.\ isometry, coisometry)
in $Z$.
\item [(iii)]  The image of $u$ in the ternary envelope ${\mathcal T}(X)$
is a unitary (resp.\ isometry, coisometry).
\end{itemize}
 \end{lemma}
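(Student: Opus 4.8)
The plan is to establish the cycle (i) $\Rightarrow$ (ii) $\Rightarrow$ (iii) $\Rightarrow$ (i), handling the unitary, isometry, and coisometry cases in parallel. Since the isometry case is the coisometry case applied to $u^*$ (interchanging the roles of $ZZ^*$ and $Z^*Z$), and the unitary case is the conjunction of the two, I would write out only the coisometry case in detail. Everything rests on two elementary facts. First, if $v \in B(K,H)$ is an operator coisometry, so $v v^* = I_H$, then $v v^* z = z$ for every $z$ in any TRO $Z \subseteq B(K,H)$ containing $v$; thus an operator coisometry is automatically a TRO-coisometry. Second, the TRO relation $u u^* z = z$ (applied to $z = u$, together with its adjoint $u^* u u^* = u^*$) forces $(uu^*)^2 = u(u^* u u^*) = uu^*$, so that $uu^*$ is a self-adjoint idempotent, i.e.\ a projection, in any concrete realization of the ambient TRO.

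For (i) $\Rightarrow$ (ii), starting from a complete isometry $T : X \to B(K,H)$ with $T(u)$ an operator coisometry, I would take $Z$ to be the sub-TRO of $B(K,H)$ generated by $T(X)$; this contains a completely isometric copy of $X$, and by the first fact $T(u)$ is a coisometry in $Z$, which is (ii).

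The conceptual heart of the argument is (ii) $\Rightarrow$ (iii). Given a TRO $Z$ containing $X$ completely isometrically with $u$ a TRO-coisometry, I would first replace $Z$ by the sub-TRO $Z_0$ generated by $X$: this is harmless, since $u \in X \subseteq Z_0$ and the relation $u u^* z = z$ survives restriction to $Z_0$. Because $Z_0$ is generated by the completely isometric image of $X$, the universal property of the ternary envelope yields a surjective ternary morphism $\theta : Z_0 \to {\mathcal T}(X)$ restricting to $j$ on $X$. The key point is that surjective ternary morphisms preserve the coisometry relation: for $w = \theta(z) \in {\mathcal T}(X)$ one computes $\theta(u)\theta(u)^* w = \theta(u u^* z) = \theta(z) = w$, so $j(u) = \theta(u)$ is a coisometry in ${\mathcal T}(X)$, which is (iii). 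Verifying this preservation, and checking that the passage to $Z_0$ and the invocation of universality are legitimate, is the step I expect to require the most care.

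Finally, for (iii) $\Rightarrow$ (i) I would realize ${\mathcal T}(X)$ concretely as a TRO in some $B(K_0,H_0)$ with $j(u)$ a TRO-coisometry. By the second elementary fact, $P := j(u) j(u)^*$ is a projection on $H_0$, and $P z = z$ for all $z \in {\mathcal T}(X)$ shows that the range of every such $z$ lies in $H := P H_0$. Hence ${\mathcal T}(X)$ embeds completely isometrically into $B(K_0,H)$, where $j(u) j(u)^* = P = I_H$, so $j(u)$ is a genuine operator coisometry. Restricting to $X$ gives the required complete isometry, proving (i). For the isometry case one compresses the source space instead, using the projection $u^* u$, and for the unitary case one compresses both sides simultaneously.
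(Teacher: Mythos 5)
Your proof is correct, and its skeleton coincides with the paper's: both establish (i) $\Rightarrow$ (ii) by passing to the sub-TRO of $B(K,H)$ generated by $T(X)$ (where the coisometry relation is trivial since $T(u)T(u)^* = I_H$), and both get (ii) $\Rightarrow$ (iii) by reducing to the sub-TRO generated by $X$ and invoking the universal property of ${\mathcal T}(X)$ --- the paper leaves this step as a one-line appeal to universality, and your computation that a surjective ternary morphism transports the relation $u u^* z = z$ is exactly the verification it has in mind. The one place you genuinely diverge is the return to (i): the paper proves (ii) $\Rightarrow$ (i) by representing the linking algebra of $Z$ nondegenerately on $H \oplus K$, so that $u u^*$, being the identity of the $C^*$-algebra $Z Z^*$, acts as $I_H$; you instead prove (iii) $\Rightarrow$ (i) by observing that $P = u u^*$ is a projection satisfying $Pz = z$ for all $z$, and compressing the target Hilbert space to $P H_0$ (resp.\ the source to $u^* u K_0$, or both sides, in the isometry and unitary cases). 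Your compression is more elementary and self-contained, avoiding the linking-algebra machinery of \cite[8.2.22]{BLM}, at the cost of a little bookkeeping about adjoints of corestrictions; the paper's version is faster given that machinery and makes the conceptual point that $u u^*$ is a unit for $Z Z^*$. A further merit of your write-up is that routing the unitary case through the TRO relations in ${\mathcal T}(X)$ and compressing both sides simultaneously correctly sidesteps the trap that ``isometry in $X$'' and ``coisometry in $X$'' a priori involve two different embeddings $T$.
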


\begin{proof}  We focus mainly the coisometry case, the others
usually being similar.   That (iii) implies (ii) is obvious.

(i) $\Rightarrow$ (ii) \ If $v = T(u)$ is a coisometry for a
complete isometry $T : X \to B(K,H)$, then $v$ is
a coisometry in the TRO $Z$ generated by $T(X)$ in $B(K,H)$.

(ii) $\Rightarrow$ (iii) \ We may assume that $Z$ is
generated by $X$, and then this follows by the universal
property of ${\mathcal T}(X)$.

(ii) $\Rightarrow$ (i) \ If $u$ is a coisometry in
$Z$ then $u u^*$ is the identity of the
 $C^*$-algebra $Z Z^*$.  If we represent the
`linking algebra' of $Z$ nondegenerately on a Hilbert space
$H \oplus K$ in the usual way (see e.g.\ 8.2.22 in \cite{BLM}),
then $u u^* = I_H$, so that $u$ is a coisometry from $K$ to $H$.

Note that if $u$ is a  unitary in $Z = {\mathcal T}(X)$,
then $u^* Z$ is a $C^*$-algebra with identity $u$,
and $T x = u^* x$ is a `unital complete isometry'.
\end{proof}

One may also phrase (iii) above in terms of the injective envelope
$I(X)$.


We recall  $u_n$ is the diagonal matrix
in $M_n(X)$ with $u$ in every diagonal entry.

\begin{theorem} \label{uosp}  An element $u$ in an
operator space  $X$ is a  unitary in $X$ if and only if $\Vert [ u_n
\; \; x ] \Vert^2 = 1 + \Vert x \Vert^2$ and  $\Vert [ u_n \; \; x
]^t \Vert^2 = 1 + \Vert x \Vert^2$, for all $x \in M_n(X)$ and $n
\in \Ndb$.
 Indeed, it suffices to consider norm one
matrices $x$ here.
Similarly, $u$ is a coisometry (resp.\ isometry)
in $X$ iff the first (resp.\ second) of these norm conditions
holds for all $x \in M_n(X)$.
 \end{theorem}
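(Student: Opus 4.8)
The plan is to reduce everything to the TRO case of Theorem~\ref{utro} via the ternary envelope, using Lemma~\ref{note}. Write $j : X \to \mathcal{T}(X)$ for the canonical completely isometric embedding, and recall that $M_n(\mathcal{T}(X))$ is again a TRO, into which $M_n(X)$ embeds completely isometrically and which is the ternary envelope of $M_n(X)$. Under this identification $u_n$ corresponds to the diagonal matrix with $j(u)$ in each diagonal entry, and the $n\times 2n$ matrix $[u_n\ x]$ over $X$ is literally the $1\times 2$ row $[u_n\ x]$ over the TRO $M_n(\mathcal{T}(X))$ (and likewise for the transposed, i.e.\ column, expression). Since a unitary is exactly an element that is simultaneously an isometry and a coisometry, and since the two displayed conditions are interchanged by passing to adjoints, it suffices to treat the coisometry case, i.e.\ the first norm condition.

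For the forward implication, suppose $u$ is a coisometry in $X$. By Lemma~\ref{note}, $j(u)$ is a coisometry in the TRO $\mathcal{T}(X)$, so $j(u)j(u)^*$ is the identity of the $C^*$-algebra $\mathcal{T}(X)\mathcal{T}(X)^*$. A one-line computation then shows that $u_n u_n^*$, being the diagonal matrix with $j(u)j(u)^*$ in each entry, acts as a left identity on $M_n(\mathcal{T}(X))$; that is, $u_n$ is a coisometry in the TRO $M_n(\mathcal{T}(X))$. Applying Theorem~\ref{utro} to this TRO and the coisometry $u_n$ gives $\Vert [u_n\ w] \Vert^2 = 1 + \Vert w \Vert^2$ for every $w \in M_n(\mathcal{T}(X))$, in particular for every $w = x \in M_n(X)$, which is the desired identity (the norm of $x$ in $M_n(X)$ agrees with that in $M_n(\mathcal{T}(X))$ since $j$ is completely isometric).

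For the converse, assume the first norm condition. Running the $C^*$-computation from the proof of Theorem~\ref{utro} inside $M_n(\mathcal{T}(X))$ (pass to $[u_n\ x]^*[u_n\ x]$ and split it into diagonal and off-diagonal parts) yields $\Vert u_n^* x \Vert = \Vert x \Vert$ for all $x \in M_n(X)$, so that left multiplication by $j(u)^*$ is completely isometric on $j(X)$; one also checks $\Vert u \Vert = 1$. By Lemma~\ref{note} our goal is to prove that $j(u)$ is a coisometry in $\mathcal{T}(X)$, i.e.\ that $p := j(u)j(u)^*$ satisfies $pz = z$ for all $z \in \mathcal{T}(X)$. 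Here the \emph{minimality} of the ternary envelope is decisive: since $j(X)$ generates $\mathcal{T}(X)$ as a TRO, left multiplication by $p$ passes through ternary products, so it is enough to establish the single family of relations $p\, j(x) = j(x)$ for all $x \in X$; these then propagate to every ternary monomial in $j(X)$, and hence by linearity and density to all of $\mathcal{T}(X)$.

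The crux, and the step I expect to be the main obstacle, is upgrading the \emph{metric} equality $\Vert j(u)^* j(x) \Vert = \Vert j(x) \Vert$ to the \emph{algebraic} relation $p\, j(x) = j(x)$. This is genuinely delicate: one has $0 \le p \le 1$ and $j(x)^* p\, j(x) \le j(x)^* j(x)$ with equal norm, yet norm equality of comparable positive elements does not by itself force equality, and this obstruction persists even after passing to all matrix levels. The intended remedy is to exploit the Hilbert $C^*$-module structure exactly as in the proof of Theorem~\ref{utro} — using that an isometric surjective module map preserves the module-valued inner product (\cite[Corollary~8.1.8]{BLM}), which would give $j(x)^* p\, j(y) = j(x)^* j(y)$ for all $x,y \in X$; then with $z = (1-p)j(x) \in \mathcal{T}(X)$ one obtains $\Vert (1-p)j(x)\Vert^2 = \Vert j(x)^*(1-p)j(x)\Vert = 0$. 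The difficulty, absent in Theorem~\ref{utro}, is that there the hypothesis was available on the \emph{entire} TRO, whereas here we control only the generating subspace $j(X)$; bridging this gap — by combining the complete isometry at all matrix levels with the minimality of $\mathcal{T}(X)$ and the triple/functional-calculus machinery — is the technical heart of the argument. Once the coisometry case is in hand, the isometry case is identical with $j(u)^* j(u)$ and the right-module structure in place of $j(u)j(u)^*$ and the left, and the unitary case follows by combining the two. Finally, that norm-one matrices suffice follows from the homogeneity of $\Vert j(u)^* x\Vert / \Vert x\Vert$, which propagates the conclusion from norm-one $x$ to all $x$.
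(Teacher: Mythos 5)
Your setup (reduction to the ternary envelope, the forward direction via Lemma \ref{note} and Theorem \ref{utro}, and the extraction of $\Vert u^* x \Vert = \Vert x \Vert$ completely isometrically on $X$) matches the paper, and your propagation observation (that $p\,j(x)=j(x)$ for $x\in X$ would imply $pz=z$ on all of ${\mathcal T}(X)$, since $p$ multiplies through ternary monomials) is correct. But the step you yourself flag as ``the technical heart'' is a genuine gap, and the vague remedy you propose --- ``combining the complete isometry at all matrix levels with the minimality of ${\mathcal T}(X)$ and the triple/functional-calculus machinery'' --- is not a proof. Minimality (generation by $j(X)$) is not the property that bridges the gap; what is needed is the \emph{essential} (rigidity) property of the ternary envelope, which is a different feature of $({\mathcal T}(X),j)$ and is exactly what the paper invokes.

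Concretely, the paper proceeds as follows. First it converts $\Vert u^* x \Vert = \Vert x \Vert$ (for $x\in X$) into $\Vert u u^* x \Vert = \Vert x \Vert$, via the $C^*$-identity: if $\Vert u^* x\Vert = 1$ with $\Vert x \Vert =1$ then $1 \geq \Vert u u^* x \Vert \geq \Vert x^* u u^* x \Vert = \Vert u^* x \Vert^2 = 1$. This matters because $L = L_{uu^*}$ maps ${\mathcal T}(X)$ into itself and is a complete contraction (as $\Vert uu^*\Vert \leq 1$), whereas $L_{u^*}$ does not map ${\mathcal T}(X)$ into itself. The same argument at all matrix levels shows $L$ is a complete isometry on $X$. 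Now the essential property of the ternary envelope (see \cite[8.3.12 (3) and 4.3.6]{BLM}) says precisely that a complete contraction on ${\mathcal T}(X)$ which restricts to a complete isometry on $X$ must be an isometry on all of ${\mathcal T}(X)$. This is the bridge from the generating subspace to the whole TRO that your argument lacks: once $\Vert uu^* z \Vert = \Vert z \Vert$ (hence $\Vert u^* z\Vert = \Vert z \Vert$) holds for \emph{every} $z \in {\mathcal T}(X)$, the hypothesis of the $C^*$-module argument in the proof of Theorem \ref{utro} is available on the entire TRO, and that argument (isometric surjective module maps preserve inner products, so $\langle uu^*z, z\rangle = \langle z,z\rangle$, whence $(1-uu^*)z = 0$) yields that $u$ is a coisometry in ${\mathcal T}(X)$; Lemma \ref{note} then finishes. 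Without invoking essentiality (or some equivalent rigidity statement), your plan stalls exactly where you say it does, so as written the proposal is incomplete.
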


\begin{proof}   We just prove the coisometry assertion, the others
following by symmetry.  The `easy  direction'
is just as in Theorem \ref{utro}.
Conversely, given
 the  $\Vert [ u_n  \; x ] \Vert^2 = 1 + \Vert x \Vert^2$
condition, we consider $X \subset Z =
{\mathcal T}(X)$.  As in the proof of Theorem \ref{utro} we
obtain
 $\Vert u^* x \Vert = \Vert x \Vert$, for all
$x \in X$.  This is equivalent to $\Vert u u^* x \Vert = \Vert x
\Vert$ for every $x \in X$.
 Indeed if $\Vert u^* x \Vert = 1$ for $x$ of norm $1$, then  $1 \geq
\Vert u u^* x \Vert \geq \Vert x^* u u^* x \Vert = 1$ by the
$C^*$-identity. Thus, the operator $L : {\mathcal T}(X) \to
{\mathcal T}(X)$ of left multiplication by $u u^*$ is an isometry on
$X$.
Similarly, it is a complete isometry on $X$.  Thus by the `essential'
property of the ternary envelope (see e.g.\ \cite{Ham} or
\cite[8.3.12 (3) and  4.3.6]{BLM}), $L$ is
an isometry on ${\mathcal T}(X)$.  By the proof of
Theorem \ref{utro}, $u$ is a coisometry in ${\mathcal T}(X)$.
By Lemma \ref{note}, $u$ is a coisometry in $X$.
\end{proof}




{\bf Remark.} \ Notice that in a selfadjoint operator space, if $u =
u^*$ then the one condition  $\Vert [ u \; \; x ] \Vert^2 = 1 +
\Vert x \Vert^2$ in the characterization of unitaries above
 is equivalent to the other condition  $\Vert [ u \; \; x ]^t \Vert^2 = 1 + \Vert x \Vert^2$,
and similarly for the matricial version of these equalities.

\section{Operator systems}

It is explained in 1.3.7 of \cite{BLM} (relying on
results from \cite{Ar}), that every unital operator space $(X,u)$
contains a canonical operator system $\Delta(X)$.  Since this system depends
(only) on the unit $u$, we will write it as $\Delta^u$.
If $X$ is represented as a subspace of $B(H)$ via a complete isometry $T$
 taking $u$ to $I_H$, then $\Delta^u = X \cap X^*$, the latter involution
and intersection taken in $B(H)$.  However the important point for us
is that as a subspace of $X$, $\Delta^u$ does not depend on
the particular $H$ or $T$.   Nor does its positive
cone, which  will be written as $\Delta^u_+$, nor does its involution;
these depend only on the unit $u$.   We now mention a recipe for describing
these elements more explicitly in terms of the norm and linear structure
of $X$.


\begin{definition}  Let $u, x$ be elements of a Banach space $X$.
 We say that $x$ is {\em $u$-hermitian} if there is a
constant $K$ such that $\Vert u + i t x \Vert^2
\leq 1 + K t^2$ for all $t \in \Rdb$.
We say that $x$ is {\em $u$-positive} if it is $u$-hermitian,
and if $\Vert \Vert x \Vert u  - x \Vert \leq \Vert x \Vert$.
\end{definition}

It is well known (and is an easy exercise) in the theory of
numerical ranges in Banach algebras, that an element $x$ in a
$C^*$-algebra with identity $u$, is selfadjoint iff it is
$u$-hermitian, and indeed in this case $\Vert u + i t x \Vert^2 = 1
+ t^2 \Vert  x \Vert^2$ for all $t \in \Rdb$.  If the reader prefers they
may take the latter as the definition of $u$-hermitian in what
follows. We remark that the given definition has the advantage that
any contractive operator  $T$ on $X$ takes $u$-hermitians to
$T(u)$-hermitians, and takes $u$-positives in Ball$(X)$  to
$T(u)$-positive elements.  Indeed the $u$-positives in Ball$(X)$ are
the $u$-hermitians with $\Vert u  - x \Vert \leq 1$.

\medskip

\begin{proposition} \label{uospa}  If $(X,u)$ is a unital operator space
then $\Delta^u$ is the span of the $u$-hermitians in $X$, and
$\Delta^u_{sa}$ is the set of $u$-hermitians in $X$, and
$\Delta^u_+$ is the set of $u$-positives in $X$.  Moreover we have
the following matricial characterization of  $\Delta^u_{sa}$
and $\Delta^u_+$: if $x
\in {\rm Ball}(X)$ then $x \in \Delta^u_{sa}$ iff $\left| \left|
\left[
\begin{array}{ccl} t u & x \\ -x &  t u
\end{array} \right]
 \right| \right| \leq \sqrt{t^2 + 1}$ for all $t \in \Rdb$; also
 $x \in \Delta^u_+$ iff $\left| \left|
\left[
\begin{array}{ccl} t u & u-x \\ x - u &  t u
\end{array} \right]
 \right| \right| \leq \sqrt{t^2 + 1}$ for all $t \in \Rdb$.
\end{proposition}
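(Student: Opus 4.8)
The plan is to fix a complete isometry $T : X \to B(H)$ with $T(u) = I_H$, identify $X$ with $T(X)$, and compute everything inside the ambient $C^*$-algebra $B(H)$, in which $\Delta^u = X \cap X^*$ and $u = I_H$. First I would dispose of the three non-matricial assertions. Because the norm of $X$ is inherited from $B(H)$, an element $x \in X$ is $u$-hermitian in the sense of the Definition precisely when it is $u$-hermitian as an element of $B(H)$; by the quoted numerical-range fact this occurs iff $x = x^*$, that is iff $x \in \Delta^u_{sa}$. Thus $\Delta^u_{sa}$ is exactly the set of $u$-hermitians, and since $\Delta^u = X \cap X^*$ is selfadjoint we get $\Delta^u = \Delta^u_{sa} + i \Delta^u_{sa}$, the span of the $u$-hermitians. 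For the cone I would use the standard fact that a selfadjoint $x \in B(H)$ is positive iff $\| \,\|x\| u - x \,\| \le \|x\|$; combined with the identification of the $u$-hermitians and with the remark that the $u$-positives in ${\rm Ball}(X)$ are the $u$-hermitians $x$ with $\|u - x\| \le 1$, this shows $\Delta^u_+$ is the set of $u$-positives.

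The substance of the proposition is the two matricial characterizations, and both rest on a single norm computation, which I expect to be the main obstacle. Writing $M_t = \left[\begin{array}{cc} tu & x \\ -x & tu \end{array}\right]$, the claim is
$$\| M_t \| = \max \{ \|x + itu\|, \ \|x - itu\| \}, \qquad t \in \Rdb .$$
To prove this I would apply the $C^*$-identity $\|M_t\|^2 = \|M_t M_t^*\|$ and observe that $M_t M_t^*$ has the block form $\left[\begin{array}{cc} A & B \\ -B & A \end{array}\right]$ with $A = t^2 u + x x^*$ selfadjoint and $B = t(x - x^*)$ skew-adjoint. Since the $2 \times 2$ scalar pattern here is diagonalized by a fixed unitary, $M_t M_t^*$ is unitarily equivalent to ${\rm diag}(A + iB, A - iB)$; a direct check then gives the factorizations $A + iB = (x - itu)(x - itu)^*$ and $A - iB = (x + itu)(x + itu)^*$, so that taking norms yields the identity. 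This is the step where the particular sign pattern of the matrix is essential, and everything after it is soft.

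Granting the identity, for $x \in {\rm Ball}(X)$ the bound $\|M_t\| \le \sqrt{t^2 + 1}$ for all $t \in \Rdb$ is equivalent to $\|x \pm itu\|^2 \le t^2 + 1$ for all $t$. If $x = x^*$ this holds, since $\|x \pm itu\|^2 = \|x^2 + t^2 u\| = \|x\|^2 + t^2 \le 1 + t^2$. For the converse I would expand $\|x + itu\|^2 = \| x x^* + t^2 u + t h \|$, where $h = i(x^* - x) = h^*$; the bound then forces $x x^* + t h \le u$ for every real $t$, and testing against a unit vector and letting $t \to \pm\infty$ forces the numerical range of $h$ to be $\{0\}$, so $h = 0$ and $x = x^*$. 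This proves the characterization of $\Delta^u_{sa}$ (the bound $\|x\|\le 1$ being automatic from the case $t=0$). The characterization of $\Delta^u_+$ then follows by applying this to $y = u - x$: the matrix $\left[\begin{array}{cc} tu & u - x \\ x - u & tu \end{array}\right]$ is precisely $M_t$ with corner entry $y = u - x$, so its norm is $\le \sqrt{t^2 + 1}$ for all $t$ iff $u - x \in \Delta^u_{sa}$ with $\|u - x\| \le 1$. Since $u \in \Delta^u_{sa}$, this says $x \in \Delta^u_{sa}$ and $\|u - x\| \le 1$, which by the first paragraph is exactly the statement that $x$ is $u$-positive, i.e.\ $x \in \Delta^u_+$.
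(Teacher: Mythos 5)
Your proof is correct, and its key step takes a genuinely different route from the paper's. The paper's own proof is essentially a pointer: it declares that only the two matricial equivalences need proof, obtains the hermitian one as the special case $v = u$, $y = x$, $z = -x$ of Lemma \ref{mult}, and reduces the positivity one to it exactly as you do (via $u - x$ and the observation that the norm condition forces $x - u \in {\rm Ball}(X)$). Lemma \ref{mult} in turn is proved, like Theorems \ref{main3} and \ref{main4}, by applying states to $a a^*$, using that the norm is unchanged when the diagonal is multiplied by $-1$, and letting $t \to \infty$ to extract $\Vert z + v y^* \Vert = 0$; the paper never computes the norm of the $2 \times 2$ matrix exactly. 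You instead exploit the skew sign pattern: $M_t M_t^*$ has the form $\bigl[\begin{smallmatrix} A & B \\ -B & A \end{smallmatrix}\bigr]$ with $A = t^2 u + x x^*$ selfadjoint and $B = t(x - x^*)$ skew, a scalar unitary (conjugating $J = \bigl[\begin{smallmatrix} 0 & 1 \\ -1 & 0 \end{smallmatrix}\bigr]$ to ${\rm diag}(i,-i)$, tensored with the identity) diagonalizes it to ${\rm diag}(A + iB, A - iB)$ even though $A$ and $B$ need not commute, and your factorizations $A \pm iB = (x \mp itu)(x \mp itu)^*$ check out, yielding the exact identity $\Vert M_t \Vert = \max \{ \Vert x + itu \Vert, \Vert x - itu \Vert \}$; your subsequent limit argument forcing $h = i(x^* - x) = 0$ is a sound numerical-range argument of the same flavor as the paper's state argument, but applied after diagonalization rather than before. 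The trade-off: the paper's Lemma \ref{mult} handles arbitrary corner entries $y, z$ and coisometries $v$, generality it reuses in Theorem \ref{main4} and in the remark after Proposition \ref{twouv}, whereas your diagonalization is special to the pattern $(x, -x)$; in exchange you get a sharper conclusion (an exact norm formula rather than merely a criterion for when the bound $\sqrt{t^2+1}$ holds), the forward implication becomes a one-line equality check, the bound $\Vert x \Vert \le 1$ is seen to be automatic from $t = 0$, and the whole proposition is proved self-contained instead of by forward reference to a lemma established two sections later.
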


\begin{proof}  Only the last two `iff's need proof.  The first
of these is a special case of Lemma \ref{mult} below.  For the
second note that the norm condition implies that $x-u \in {\rm
Ball}(X)$, and then the first `iff' implies $x \in \Delta^u_{sa}$,
so that $x$ is $u$-positive.
\end{proof}

 Of course the involution on
$\Delta^u$ is just $(h + i k)^* = h - ik$, if $h, k \in
\Delta^u_{sa}$.

\begin{corollary}
 \label{elm}  A unital operator space $(X,u)$ is an operator system
iff the $u$-hermitians span $X$, and iff  the
$u$-positives span $X$.
\end{corollary}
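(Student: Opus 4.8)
The plan is to reduce everything to the single fact that $X$ is an operator system if and only if $\Delta^u = X$. Recall from the discussion preceding Proposition \ref{uospa} (and 1.3.7 of \cite{BLM}) that if we represent $X$ completely isometrically in some $B(H)$ with $u \mapsto I_H$, then $\Delta^u = X \cap X^*$, with involution and positive cone depending only on $u$. If $X$ is an operator system, we may choose the representation so that $T(X)$ is selfadjoint, whence $\Delta^u = T(X) \cap T(X)^* = T(X)$, so that $\Delta^u = X$. Conversely, if $\Delta^u = X$, then in any such representation $X = \Delta^u = X \cap X^*$ is selfadjoint in $B(H)$, so $X$ is an operator system by definition. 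Thus $X$ is an operator system precisely when $\Delta^u = X$.

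First I would dispose of the hermitian equivalence. By Proposition \ref{uospa}, $\Delta^u$ is exactly the span of the $u$-hermitians in $X$. Since $\Delta^u$ is a subspace of $X$, the $u$-hermitians span $X$ if and only if $\Delta^u = X$, which by the previous paragraph holds if and only if $X$ is an operator system. This settles the first `iff'.

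For the positive cone, Proposition \ref{uospa} identifies the $u$-positives with $\Delta^u_+$. The key observation is that the positives always span the operator system $\Delta^u$: indeed $\spn \Delta^u_+ \subseteq \Delta^u$ trivially, while if $h \in \Delta^u_{sa}$ with $\Vert h \Vert \leq 1$ then $u \pm h \in \Delta^u_+$ (as $-u \leq h \leq u$ in the ambient $C^*$-algebra) and $h = \tfrac{1}{2}\bigl( (u+h) - (u-h) \bigr)$, so $\Delta^u_{sa} \subseteq \spn \Delta^u_+$; combined with $\Delta^u = \Delta^u_{sa} + i \Delta^u_{sa}$ this gives $\spn \Delta^u_+ = \Delta^u$. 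Hence the $u$-positives span $X$ if and only if $\Delta^u = X$, i.e.\ if and only if $X$ is an operator system, giving the second `iff'.

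The argument is short once Proposition \ref{uospa} is in hand; the only step requiring any thought is the final spanning claim for the positive cone, and even there the standard $u \pm h$ decomposition of a contractive hermitian into positives does all the work. The one point to be careful about is that $\Delta^u$, its involution, and its cone are intrinsic to $(X,u)$ and independent of the representation, so that the identity $\Delta^u = X$ is meaningful and transfers correctly between the operator-system definition and the two spanning conditions.
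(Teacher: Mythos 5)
Your proof is correct and is essentially the paper's intended argument: the corollary is stated without proof as an immediate consequence of Proposition \ref{uospa}, via exactly the reduction you make ($X$ is an operator system iff $\Delta^u = X$, using representation-independence of $\Delta^u$), together with the standard $h = \tfrac{1}{2}\bigl((u+h)-(u-h)\bigr)$ decomposition showing the positives span $\Delta^u$ (cf.\ the paper's remark after the corollary about combinations of two or four elements). Nothing is missing; you have simply written out the routine details the paper leaves implicit.
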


{\bf Remark.}  We can obviously  replace the spans in the last result by
linear combinations of two or four elements respectively.  So,
a unital operator space $(X,u)$ is an operator system iff for every $x \in X$ there exists
a $y \in X$ with $x + y$ and $i(x-y)$ $u$-hermitian.

\begin{corollary}  Let $(X,u)$ be a unital operator space which also
possesses a conjugate linear involution $*$.  Then $(X,u)$ is an
operator system whose involution is $*$
if and only if $x = x^* \in X$ implies that $x$ is $u$-hermitian.
  \end{corollary}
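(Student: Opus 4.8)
The plan is to prove the two implications separately, leaning on the identification of $\Delta^u_{sa}$ with the set of $u$-hermitians from Proposition \ref{uospa}, together with the spanning criterion of Corollary \ref{elm}. Throughout the reverse direction I will temporarily write $\dagger$ for the \emph{canonical} operator system involution (the one induced by $u$, given by $(a+ib)^\dagger = a - ib$ for $a,b \in \Delta^u_{sa}$) in order to keep it distinct from the given involution $*$, until the two are shown to coincide.

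For the forward implication, suppose $(X,u)$ is an operator system whose involution is $*$. Then $\Delta^u = X$ and the canonical involution equals $*$, so $\Delta^u_{sa} = \{ x \in X : x = x^* \}$. By Proposition \ref{uospa}, $\Delta^u_{sa}$ is precisely the set of $u$-hermitians, so any $x$ with $x = x^*$ is $u$-hermitian. This direction is immediate.

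For the reverse implication, assume every $*$-selfadjoint element is $u$-hermitian. First I would split an arbitrary $x \in X$ as $x = h + ik$, where $h = \frac{1}{2}(x + x^*)$ and $k = \frac{1}{2i}(x - x^*)$. Since $*$ is a conjugate-linear involution one checks that $h^* = h$ and $k^* = k$, so by hypothesis $h$ and $k$ are $u$-hermitian. Thus the $u$-hermitians span $X$, and Corollary \ref{elm} shows that $(X,u)$ is an operator system. It then remains only to identify its canonical involution $\dagger$ with $*$.

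To do this I would apply the formula $(a+ib)^\dagger = a - ib$ (valid for $a, b \in \Delta^u_{sa}$) to the decomposition $x = h + ik$: since $(X,u)$ is now an operator system, Proposition \ref{uospa} gives $\Delta^u = X$ with $\Delta^u_{sa}$ equal to the $u$-hermitians, so $h, k \in \Delta^u_{sa}$ and hence $x^\dagger = h - ik$. On the other hand a direct computation using conjugate linearity gives $x^* = h^* - i k^* = h - ik$, whence $x^\dagger = x^*$ for every $x$. The only point needing slight care is the well-definedness of $x^\dagger = h - ik$, which rests on the uniqueness of the real/imaginary decomposition in $\Delta^u_{sa}$ (equivalently $\Delta^u_{sa} \cap i\,\Delta^u_{sa} = \{0\}$); this holds automatically once $(X,u)$ is known to be an operator system. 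Beyond this minor bookkeeping I anticipate no real obstacle, so the argument is essentially routine.
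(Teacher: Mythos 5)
Your proof is correct and follows essentially the same route as the paper: the paper's proof also observes that the $*$-selfadjoint elements span $X$, so by hypothesis the $u$-hermitians span $X$, and invokes Corollary \ref{elm} to conclude $(X,u)$ is an operator system, dismissing the rest (the forward direction and the identification of the canonical involution with $*$) as obvious. Your write-up merely fills in those ``obvious'' details---the decomposition $x = h + ik$, the formula $(h+ik)^\dagger = h - ik$ from the remark after Proposition \ref{uospa}, and the well-definedness point---all correctly.
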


\begin{proof}   Suppose that   $x = x^*$ implies that $x$ is $u$-hermitian.
Since the set of elements with $x = x^*$ spans $X$, so does the
set of $u$-hermitians.
Thus $X$ is an operator system.
The rest is obvious.
\end{proof}

{\bf Remark.}  The conditions in the last corollary are also equivalent to 
the matricial condition $\left| \left|
\left[
\begin{array}{ccl} t u & x \\ - x^* &  t u
\end{array} \right]
 \right| \right| \leq \sqrt{t^2 + 1}$ for all $t \in \Rdb$.   This follows from 
the next proof, or from Lemma
\ref{mult} below.

\medskip  

We now prove Theorem \ref{main3}.

\begin{proof} \  (Of {\bf Theorem
\ref{main3}}) \  If $I_H \in X = X^* \subset B(H)$ then by the
$C^*$-identity it is easy to see that $\left| \left|   \left[
\begin{array}{ccl} t I & x \\ -x^* & tI
\end{array} \right]
 \right| \right|^2   = t^2 + 1$ for every $t \in \Rdb$
and $x \in X$ with $\Vert x \Vert = 1$.
 Conversely, if $I_H \in X  \subset B(H)$ and
 if the condition in Theorem
\ref{main3} involving the infimum holds,
 then for all $n \in \Ndb$ there is an element
 $y_n \in X$ with
 $\left| \left|   \left[
\begin{array}{ccl}  n I & x \\ y_n &  n I
\end{array} \right]
 \right| \right|^2   \leq  n^2 + 1 + \frac{1}{n}$.
The norm of the last matrix is the unchanged if we multiply
the `diagonal entries' by $-1$.   Using the
 $C^*$-identity it follows that for every state $\varphi$ on $M_2(B(H))$ we
 have
 $$n^2 + \varphi \Bigl( \left[
 \begin{array}{ccl} x x^* & 0 \\ 0 & y_n y_n^*
 \end{array} \right] \Bigr) \pm n \varphi \Bigl( \left[ \begin{array}{ccl} 0 & x  +
 y_n^*\\ y_n + x^*  & 0
 \end{array} \right] \Bigr) \leq n^2 + 1 + \frac{1}{n} .$$
Taking
the supremum over all states $\varphi$, we deduce that $$\Vert y_n +
x^* \Vert \; = \; \left| \left| \left[
 \begin{array}{ccl} 0 & x  + y_n^* \\ y_n + x^*  & 0
 \end{array} \right] \right| \right|  \; \leq \;  \frac{1}{n}  + \frac{1}{n^2}  .$$
 Hence $y_n \to -x^*$, and so $X^* = X$.
 \end{proof}

{\bf Remark.} \ In Theorem \ref{main3} it does not suffice to take
$t = 1$, even in the case that $\Vert x \Vert = 1$.  Indeed it is
easy to argue that any nonselfadjoint unital function space will be
a counterexample to this.

\medskip

We can use the ideas above to characterize $C^*$-algebras among the
operator systems or unital operator spaces.   We will write the
identity $u$ of our operator system $X$ as $1$.   This topic is very
closely related to the question of recovering a forgotten product on
a $C^*$-algebra, which was discussed e.g.\ on p.\ 316 of \cite{BEZ}.
The route we take here is that since unitaries have been
characterized in Section 2, to characterize $C^*$-algebras it
suffices to 1) \ characterize when $X$ is the span of the unitaries
it contains, and 2) \ to characterize when the product in $C^*_{\rm
e}(X)$ of every two unitaries $u$ and $v$ in $X$ is again in $X$.
There seem to be many simple characterizations for 1) in the context
of unital $C^*$-algebras, for example that for every $u$-hermitian
element $x$ with $\Vert x \Vert \leq 1$, there exists a unitary
 $v$ in $X$ with $2x -v$ also unitary in $X$ (equivalently,
$x$ is the average of two unitaries).
The  condition 2)
also appears to be `characterizable' in many very different ways.  For example,
$X$ contains the product $vu$ in $C^*_{\rm e}(X)$ of any two
unitaries $u,v$ in $X$ if and only if the matrix $\left[
\begin{array}{ccl} 1 & u
\\ v & x
 \end{array} \right]$ is $\sqrt{2}$ times a unitary (characterized in
 Section 2) in $M_2(X)$ for some
$x \in X$.   Or there is a similar characterization using a $3
\times 3$ positivity condition as in \cite{Wa}.   We leave these to
the interested reader. The best such condition we have found to date
is as follows:

\begin{theorem} \label{main4}  A unital operator space $(X,1)$
(characterized above) is isomorphic to a $C^*$-algebra via a unital
complete isometry, if and only if {\rm 1)} for every $1$-hermitian
element $x$ with  $\Vert x \Vert = 1$, there exists a unitary
 $v$ in $X$ with $2x -v$ also unitary in $X$,
and {\rm 2)} \ for any unitary $v \in X$ and any $y \in {\rm
Ball}(X)$, there exists an element $z \in {\rm Ball}(X)$ with
$\left| \left| \left[
\begin{array}{ccl} t 1 & y \\ z & t v \end{array} \right] \right|
\right| \leq \sqrt{1 + t^2}$ for all $t \in \Rdb$.  It is also
equivalent to the conditions mentioned in  Theorem {\rm
\ref{main4prime}}.
 \end{theorem}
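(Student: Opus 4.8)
The plan is to run the program sketched just before the statement: since unitaries have already been characterized (Section 2, together with Lemma \ref{note}), it suffices to recognize $X$, sitting inside its $C^*$-envelope $A = C^*_e(X)$, as a $C^*$-subalgebra, i.e.\ to show that $X$ is closed under the adjoint and the product of $A$. Throughout I identify a unitary $v$ of $X$ (in the operator space sense) with a genuine unitary of $A$, using Lemma \ref{note}, so that $vv^* = v^*v = 1$ in $A$; and I recall that $1$-hermitian coincides with self-adjoint inside the operator system sitting in $A$.

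\emph{Easy direction.} If $X$ is a $C^*$-algebra via a unital complete isometry, condition 1) is the standard fact that a self-adjoint contraction $x$ is the average of the unitaries $x \pm i(1-x^2)^{1/2}$ (and $1$-hermitian means self-adjoint here). For condition 2), given a unitary $v$ and $y \in {\rm Ball}(X)$, take $z = -vy^* \in X$; then with $M = \left[\begin{smallmatrix} t1 & y \\ z & tv\end{smallmatrix}\right]$ one computes that $MM^*$ is the diagonal matrix with blocks $t^2 1 + yy^*$ and $t^2 1 + zz^*$, so $\Vert M\Vert^2 = t^2 + \Vert y\Vert^2 \le t^2 + 1$.

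\emph{Equivalence with Theorem \ref{main4prime}.} Taking $v = 1$ in condition 2) is exactly the infimum criterion of Theorem \ref{main3}, so condition 2) forces $X = X^*$, an operator system. Granting this, condition 1) says that every $1$-hermitian (now, self-adjoint) contraction is an average of two unitaries, whence $X_{\rm sa}$, and therefore $X = X_{\rm sa} + iX_{\rm sa}$, lies in the span of the unitaries of $X$. Thus ``1) and 2)'' is equivalent to ``$X$ is spanned by its unitaries, together with condition 2)'', which are precisely the hypotheses of Theorem \ref{main4prime}.

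\emph{Hard direction (the heart).} Assume the hypotheses of Theorem \ref{main4prime}. Fix a unitary $v \in X$ and $y \in {\rm Ball}(X)$, and for each $n$ use the infimum hypothesis at $t = n$ to choose $z_n \in {\rm Ball}(X)$ with $\Vert M_n\Vert^2 \le n^2 + 1 + \tfrac1n$, where $M_n = \left[\begin{smallmatrix} n1 & y \\ z_n & nv\end{smallmatrix}\right]$. Working in $A \subseteq B(H)$ and using $vv^* = 1$, the $C^*$-identity gives $\Vert M_nM_n^*\Vert \le n^2 + 1 + \tfrac1n$, where $M_nM_n^* = \left[\begin{smallmatrix} n^2 + yy^* & n(z_n^* + yv^*) \\ n(z_n + vy^*) & n^2 + z_nz_n^*\end{smallmatrix}\right]$. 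Now I would run the state argument of Theorem \ref{main3} verbatim: evaluate against an arbitrary state $\varphi$ of $M_2(A)$, flip the sign of the off-diagonal block by conjugating with ${\rm diag}(1,-1)$ to obtain the inequality with both signs, discard the nonnegative diagonal contribution, and take the supremum over $\varphi$ of the off-diagonal (self-adjoint, and equal in norm to its entry $z_n + vy^*$). This yields $\Vert z_n + vy^*\Vert \le \tfrac1n + \tfrac1{n^2} \to 0$, so $z_n \to -vy^*$; as $X$ is closed, $vy^* \in X$, and by homogeneity this holds for all $y \in X$. Taking $v = 1$ gives $y^* \in X$ for every $y$, so $X = X^*$; then for a general unitary $v$ and $w \in X$, writing $w = (w^*)^*$ gives $vw \in X$, i.e.\ $vX \subseteq X$. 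Since $X$ is spanned by its unitaries, $XX \subseteq X$, so $X$ is a norm-closed, self-adjoint, unital subalgebra of $A$, hence a $C^*$-subalgebra, and the inclusion is the desired unital complete isometry onto a $C^*$-algebra. I expect the main obstacle to be exactly this limiting state computation forcing $z_n \to -vy^*$; the subsequent algebra is routine, and I would only need to note that the infimum form of condition 2) suffices, since at each $t = n$ one uses merely an approximate minimizer $z_n$ rather than an achieved one.
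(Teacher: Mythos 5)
Your proposal is correct and follows essentially the same route as the paper's proof: the easy direction via $z = -vy^*$ and the $C^*$-identity, and the converse by taking $v=1$ to get selfadjointness (Theorem \ref{main3}) and then running the same state/sign-flip argument on $M_nM_n^*$ to conclude $\Vert z_n + vy^*\Vert \leq \frac{1}{n} + \frac{1}{n^2}$, hence $vX = vX^* \subset X$, which together with the spanning unitaries exhibits $X$ as a $C^*$-subalgebra. You merely make explicit some details the paper leaves terse (the averages-of-unitaries fact behind condition 1), and the cycle of implications closing the equivalence with Theorem \ref{main4prime}).
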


\begin{proof}   For the easy direction we may assume that $(X,1)$
is a unital $C^*$-algebra, and then by the $C^*$-identity the
asserted relation holds with $z = -v y^*$, and in fact in this case
the norm of the matrix is exactly $\sqrt{t^2 + \Vert y \Vert^2}$.
For the converse(s),  suppose that $1 = I_H \in X \subset B(H)$. If
the condition involving the infimum holds, then taking $v = 1$ we
see by Theorem \ref{main3} that $X$ is selfadjoint (that is, is an
operator system).  Also, for all $n \in \Ndb$ there is a $z_n \in X$
with
 $\left| \left|   \left[
\begin{array}{ccl}  n I & y \\ z_n &  n v
\end{array} \right]
 \right| \right|^2   \leq  n^2 + 1 + \frac{1}{n}$.
  Proceeding as in the proof of Theorem \ref{main3} we deduce that
$\Vert z_n + v y^* \Vert \leq 1/n + 1/n^2$, and so $v y^* \in X$.
That is, $v X = v X^* \subset X$.  Since the unitaries in $X$ are
spanning, $X$ is a $C^*$-subalgebra of $B(H)$.
\end{proof}

{\bf Remark.} \   The last theorem and its proof are still valid if
we replace ``$y \in {\rm Ball}(X)$'' with $y$ in the set of
unitaries in $X$.

\medskip

The next result gives a linear-metric `method' to retrieve a
forgotten product of any two elements, one of which is an isometry
or coisometry.

\begin{lemma} \label{mult}  Suppose that $I_H \in X \subset B(H)$,
and that $v$ is a coisometry (resp.\ isometry) on $H$ which lies in
$X$. If $y, z \in {\rm Ball}(X)$ then $z = - vy^*$ (resp.\ $y = -
z^* v$) in $B(H)$ if and only if $\left| \left| \left[
\begin{array}{ccl} t 1 & y \\ z & t v \end{array} \right] \right|
\right| \leq \sqrt{1 + t^2}$ for all $t \in \Rdb$.
\end{lemma}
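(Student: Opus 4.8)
The plan is to prove the coisometry statement; the isometry case then follows by the entirely symmetric computation with $M^*M$ in place of $MM^*$ throughout (equivalently, by passing to adjoints and interchanging the roles of the two corners). Throughout write
$$M_t = \left[ \begin{array}{cc} t 1 & y \\ z & t v \end{array} \right] \in M_2(B(H)),$$
and recall that, $v$ being a coisometry in $B(H)$, we have $v v^* = I_H$. For the easy ``only if'' direction I would assume $z = -v y^*$ and compute $M_t M_t^*$, invoking the $C^*$-identity $\Vert M_t \Vert^2 = \Vert M_t M_t^* \Vert$. The two off-diagonal corners of $M_t M_t^*$ cancel (this is exactly where $z = -v y^*$ enters, together with $v v^* = I_H$), leaving the block-diagonal operator with diagonal entries $t^2 I_H + y y^*$ and $t^2 I_H + v y^* y v^*$. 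Since $y y^*$ and $v y^* y v^* = (v y^*)(v y^*)^*$ are positive operators of norm at most $\Vert y \Vert^2 \le 1$ (here $y \in \mathrm{Ball}(X)$ is used), the norm of this diagonal operator is $t^2 + \max(\Vert y\Vert^2, \Vert v y^*\Vert^2) \le t^2 + 1$, which gives $\Vert M_t \Vert \le \sqrt{t^2+1}$.

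For the converse I would follow the template of the proof of Theorem \ref{main3}. Fixing a large $n \in \Ndb$ and taking $t = n$, I first observe that multiplying $M_n$ on the left by the unitary $\mathrm{diag}(-I_H, I_H)$ and on the right by $\mathrm{diag}(I_H, -I_H)$ negates both diagonal entries while leaving $y$ and $z$ fixed; since these are unitaries, the resulting matrix $M_n'$ satisfies $\Vert M_n' \Vert = \Vert M_n \Vert \le \sqrt{n^2+1}$. Expanding $M_n M_n^*$ and $M_n' M_n'^*$, each equals $n^2 I$ plus the positive diagonal part $D = \mathrm{diag}(y y^*, z z^*)$ plus a self-adjoint off-diagonal part $\pm n E$, where $E$ has lower-left corner $z + v y^*$ and upper-right corner its adjoint $z^* + y v^*$. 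Evaluating at an arbitrary state $\varphi$ on $M_2(B(H))$ and using $\Vert M_n M_n^*\Vert, \Vert M_n' M_n'^*\Vert \le n^2 + 1$ yields $\varphi(D) \pm n\, \varphi(E) \le 1$; since $\varphi(D) \ge 0$ this gives $n\, |\varphi(E)| \le 1$ for every state $\varphi$. Taking the supremum over states, and using that $E$ is self-adjoint with $\Vert E \Vert = \Vert z + v y^* \Vert$, I get $\Vert z + v y^* \Vert \le 1/n$; letting $n \to \infty$ forces $z = -v y^*$, as desired.

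The step I expect to be the crux is the sign-flipping multiplication by unitaries combined with the averaging of the two resulting state inequalities: this is precisely the device that isolates the off-diagonal term $E$, since the positive diagonal contribution $\varphi(D)$ — which one has no direct control over — is simply discarded once both of the $\pm$ inequalities are in hand. Everything else is routine once $v v^* = I_H$ is used to collapse the lower-right corner of $M_t M_t^*$ to $t^2 I_H + z z^*$, and once one recalls the standard fact that a self-adjoint element of a unital $C^*$-algebra attains its norm as a supremum of $|\varphi(\cdot)|$ over states.
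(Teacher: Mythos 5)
Your proof is correct and is essentially the paper's own argument: the paper proves Lemma \ref{mult} by citing the state-evaluation device from the proofs of Theorems \ref{main3} and \ref{main4} (apply states to $a^*a$ or $aa^*$, use the $C^*$-identity and the sign-flip/negative-$t$ trick to discard the positive diagonal part and isolate the off-diagonal term), which is exactly what you do, with the isometry case handled by the same symmetry. Your write-up just makes explicit the details the paper leaves implicit.
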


\begin{proof}   This is similar to the proof of Theorem \ref{main4}.
If $a$ is the matrix in the lemma, then by applying states on
$M_2(B(H))$ to $a^* a$ or $a a^*$ we obtain as in that proof that
$\Vert z + vy^* \Vert = 0$ (resp.\ $\Vert y + z^* v \Vert = 0$).
\end{proof}

We note that Theorem \ref{main4} together with Sakai's theorem
immediately gives a linear-metric characterization of
$W^*$-algebras.  We mention an interesting related open question
concerning `dual operator systems'. We recall that an operator space
$X$ is a {\em dual operator space} if it is the operator space dual
of another operator space; and it is a fact that this is essentially
the same as saying that $X$ is a weak* closed subspace of some
$B(H)$.   For this, and for other aspects of the duality of operator
spaces we refer the reader to e.g.\ \cite[Section 1.4]{BLM}. As far
as we know, the analogous fact for operator systems, or for unital
operator spaces, is open: for example whether an operator system
which is also a dual operator space is isomorphic (in the obvious
sense) to a weak* closed operator system in $B(H)$, for some Hilbert
space $H$. We can offer the following `first step' in this
direction.

\begin{lemma} \label{dd} Suppose that $(X ,u)$ is a unital operator space
and suppose also that $X$ is also a dual Banach space.  Then
$\Delta^u$ and $\Delta^u_{\rm sa}$ are weak* closed, and the
involution on $\Delta^u$ is weak* continuous.
\end{lemma}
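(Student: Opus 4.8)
The plan is to prove the three assertions in the order: first that $\Delta^u_{\rm sa}$ is weak* closed, then that $\Delta^u$ is weak* closed, and finally that the involution is weak* continuous, deducing the last two from the first by a short compactness argument. Write $X = Y^*$ for a predual $Y$, so that the weak* topology is $\sigma(X,Y)$. The one genuinely useful point is that hermitian elements are detected by the \emph{ground-level} norm of $X$, via the definition of $u$-hermitian preceding Proposition~\ref{uospa}; since the norm of a dual Banach space is automatically weak* lower semicontinuous, we never need any weak* lower semicontinuity of the matrix norms of $X$.

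To show $\Delta^u_{\rm sa}$ is weak* closed, recall from Proposition~\ref{uospa} that $\Delta^u_{\rm sa}$ is exactly the set of $u$-hermitian elements, and that these are selfadjoint elements of the operator system $\Delta^u \subseteq B(H)$ (with $u = I_H$), so that $\Vert u + itx \Vert^2 = 1 + t^2 \Vert x \Vert^2$ for every $x \in \Delta^u_{\rm sa}$ and $t \in \Rdb$. For fixed $t \in \Rdb$ and $c > 0$, the set $S_{t,c} = \{ x \in X : \Vert u + itx\Vert \le c \}$ is weak* closed, being the preimage of the (weak* closed) norm ball of radius $c$ under the weak* continuous affine map $x \mapsto u + itx$. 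I would then observe that an $x$ with $\Vert x \Vert \le r$ lies in $\Delta^u_{\rm sa}$ if and only if $\Vert u + itx\Vert^2 \le 1 + r^2 t^2$ for all $t \in \Rdb$: the forward implication is the displayed equality together with $\Vert x\Vert \le r$, and the reverse implication exhibits $x$ as $u$-hermitian with constant $K = r^2$. Hence
$$\Delta^u_{\rm sa} \cap r\,{\rm Ball}(X) = r\,{\rm Ball}(X) \cap \bigcap_{t \in \Rdb} S_{t,\sqrt{1 + r^2 t^2}}$$
is weak* closed for every $r > 0$, and since $\Delta^u_{\rm sa}$ is a real-linear (hence convex) subspace, the Krein--Smulian theorem gives that $\Delta^u_{\rm sa}$ is weak* closed.

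For the remaining two claims I would pass to real and imaginary parts. Each $z \in \Delta^u$ is $z = h + ik$ with $h = \frac{1}{2}(z + z^*)$ and $k = \frac{1}{2i}(z - z^*)$ in $\Delta^u_{\rm sa}$, and $\Vert h\Vert, \Vert k\Vert \le \Vert z\Vert$ since the involution of $\Delta^u \subseteq B(H)$ is isometric. To see $\Delta^u$ is weak* closed it suffices, by Krein--Smulian, to take a net $z_\lambda \in \Delta^u \cap r\,{\rm Ball}(X)$ with $z_\lambda \to z$ weak* and to show $z \in \Delta^u$. The nets $h_\lambda, k_\lambda$ then lie in $\Delta^u_{\rm sa} \cap r\,{\rm Ball}(X)$, which is weak* compact by Banach--Alaoglu and the previous paragraph; passing to a subnet I may assume $h_\lambda \to h$ and $k_\lambda \to k$ weak* with $h,k \in \Delta^u_{\rm sa}$. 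Then $z_\lambda = h_\lambda + i k_\lambda \to h + ik$, so uniqueness of weak* limits gives $z = h + ik \in \Delta^u$. Along the same subnet $z_\lambda^* = h_\lambda - i k_\lambda \to h - ik = z^*$, which is the weak* continuity of the involution on bounded sets; since the involution is isometric, it is weak* continuous on all of $\Delta^u$ by the Krein--Smulian characterization of weak* continuous maps.

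The step I expect to require the most care is the reduction in the second paragraph to the ground-level norm. It is tempting to run Krein--Smulian directly on the matricial characterization of $\Delta^u_{\rm sa}$ in Proposition~\ref{uospa}, but proving that inequality defines a weak* closed set would require the norm on $M_2(X)$ to be weak* lower semicontinuous, i.e.\ that $X$ carry a dual \emph{operator} space structure compatible with $\sigma(X,Y)$ --- more than is hypothesised. Working instead with the scalar condition $\Vert u + itx\Vert^2 \le 1 + Kt^2$, and the uniform choice $K = r^2$ valid on $r\,{\rm Ball}(X)$, avoids this issue entirely.
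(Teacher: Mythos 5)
Your proposal is correct and follows essentially the same route as the paper's proof: weak* lower semicontinuity of the dual norm together with Krein--Smulian to get $\Delta^u_{\rm sa}$ weak* closed, decomposition into hermitian real and imaginary parts with a bounded-net/Alaoglu subnet argument for $\Delta^u$, and a Krein--Smulian variant for weak* continuity of the involution. One small polish for the final step: convergence of $z_\lambda^*$ along one subnet does not by itself give convergence of the full net --- as the paper does, observe that uniqueness of the hermitian decomposition forces \emph{every} weak* convergent subnet of $(z_\lambda^*)$ to have limit $z^*$, and then boundedness (weak* compactness of $r\,{\rm Ball}(X)$) upgrades this to $z_\lambda^* \to z^*$.
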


\begin{proof}   Suppose that  $(x_s)$ is a net in $\Delta^u_{\rm sa} \cap {\rm
Ball}(X)$, with $x_s \to x$ weak* in $X$.  Then $\Vert u + i t x_s
\Vert \leq \sqrt{1 + t^2}$.  Taking a limit with $s$ we see that $x
\in \Delta^u_{\rm sa}$.  Thus by the Krein-Smulian theorem,
$\Delta^u_{\rm sa}$ is weak* closed.    Next suppose that $(x_s + i
y_s)$ is a bounded net in $\Delta^u$, with limit $z$.  Here $x_s,
y_s \in \Delta^u_{\rm sa}$.  Then $(x_s)$ and $(y_s)$ are bounded
nets.  Suppose that a subnet $(x_{s_\lambda})$ converges weak* to
$x$ say.  Then $(y_{s_\lambda})$ has a subnet converging weak* to
$y$ say.  Replacing the nets by subnets, it is easy to see now that
$z = x + i y \in \Delta^u$.  So $\Delta^u$ is weak* closed. Finally,
suppose that $x_s + i y_s \to x + i y$ for $x, y \in \Delta^u_{\rm
sa}$.   The argument above shows that every weak* convergent subnet
of $(x_s)$ converges to $x$.  Thus $x_s \to x$ weak*.  Similarly,
$y_s \to y$ weak*, and so $(x_s + i y_s)^* = x_s - i y_s \to x - i
y$. By a variant of the Krein-Smulian theorem, the involution is
weak* continuous.
\end{proof}

\begin{corollary} \label{dd2}  If $X$ is an operator system which is also
a dual Banach space, then the involution on $X$ is weak* continuous.
\end{corollary}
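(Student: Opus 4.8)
The plan is to observe that Corollary \ref{dd2} is essentially an immediate consequence of Lemma \ref{dd}, the only real content being to identify the abstract involution carried by an operator system with the intrinsic involution on $\Delta^u$. Accordingly, I would organize the argument around reducing the operator system case to the unital operator space case already handled in Lemma \ref{dd}.

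First I would recall that an operator system is, by definition, a unital operator space $(X,u)$ admitting a linear complete isometry $T : X \to B(H)$ with $T(u) = I_H$ and $T(X)$ selfadjoint. In particular $X$ is a unital operator space, so together with the hypothesis that $X$ is a dual Banach space, the hypotheses of Lemma \ref{dd} are satisfied verbatim.

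Next I would argue that $X = \Delta^u$ as a subspace of itself. Since $T(X) = T(X)^*$ inside $B(H)$, the characterization of $\Delta^u$ recalled before Proposition \ref{uospa} gives $\Delta^u = X \cap X^* = X$; equivalently, by Corollary \ref{elm} the $u$-hermitians span $X$, which forces $\Delta^u = X$. I would also note that the involution that $X$ carries as an operator system agrees with the canonical involution on $\Delta^u$, because, as emphasized in the text, the latter involution is intrinsic and depends only on the unit $u$, not on the particular $H$ or $T$.

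Finally, Lemma \ref{dd} asserts that the involution on $\Delta^u$ is weak* continuous. Combining this with the identification $\Delta^u = X$ and the matching of the two involutions yields weak* continuity of the involution on $X$, which is exactly the assertion. The only point that requires any care, and it is not truly an obstacle, is verifying that the operator-system involution on $X$ genuinely coincides with the canonical involution on $\Delta^u$; this is guaranteed by the representation-independence of the $\Delta^u$ structure noted above.
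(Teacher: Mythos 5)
Your proposal is correct and matches the paper's intended argument: the paper states Corollary \ref{dd2} without proof precisely because, for an operator system $(X,u)$, selfadjointness gives $\Delta^u = X \cap X^* = X$ with the canonical (representation-independent) involution, so Lemma \ref{dd} applies verbatim. Your care in checking that the operator-system involution coincides with the intrinsic involution on $\Delta^u$ is exactly the one point the paper leaves implicit.
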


It is easy to see that if $X$ is a dual operator space possessing a
weak* continuous conjugate linear involution $*$ for which $\Vert
[x^*_{ji}] \Vert_n = \Vert [x_{ij}] \Vert $ for all matrices
$[x_{ij}]$ with entries in $X$, then there exists a weak*
homeomorphic $*$-linear complete isometry from $X$ onto a weak*
closed selfadjoint subspace $W$ of some $B(H)$.  Indeed if $\varphi
: X \to B(H)$ is any weak* continuous complete isometry then the
function $x \mapsto \left[
\begin{array}{ccl} 0 & \varphi(x) \\  \varphi(x^*)^* & 0
\end{array} \right]$ does the trick.  This immediately gives an
abstract characterization of weak* closed selfadjoint subspaces of
$B(H)$.   By the last corollary, any operator system which is also a
dual operator space is at least one of the latter. However we are
unable to go further at this point.




\section{More on operator systems}

 The following facts will be useful
to us below. Given a unital operator space $(X,u)$, another way to
recapture the involution which is sometimes useful is as $u x^* u$,
the latter
 product and involution taken in a ternary envelope
${\mathcal T}(X)$. It is easy to see that $(X,u)$ is an operator system iff
$u X^* u \subset X$ within ${\mathcal T}(X)$, and in this case the expression
$u x^* u$ is independent of
the particular ternary envelope of $X$ chosen.
Also, we leave it to the reader to check that
the set of positive elements $\Delta^u_+$ in a unital operator space $(X,u)$
is precisely ${\mathfrak d}_u \cap X$, in the notation of
\cite{BN1}.   This is a very useful alternative description
of the positive elements in $X$.

\begin{example} \label{exC}  If $u$ is any unitary in a $C^*$-algebra or TRO $A$,
then $(A,u)$ is an operator system.  This follows by the
facts presented above Example \ref{exC}, since in this case $u A^* u \subset A$.
Moreover, any two unitaries $u, v \in A$ induce in some sense
the same operator system structure, since the map $T(x) = v u^* x$
on $A$ is a surjective complete isometry taking $u$ to $v$;
and hence $T$ is a complete
order isomorphism too, by e.g.\ 1.3.3 in \cite{BLM}.
\end{example}

The features in the last example fail badly for more general operator spaces.
It is easy to find operator spaces $X$ with unitaries     $u, v$
for which $(X,u)$ is not an operator system but
$(X,v)$ is; or for which they are both
operator systems but there exists no  surjective complete isometry
taking $u$ to $v$.   Moreover, the latter can be done
with $u$ and $v$ inducing the same involution on $X$.
We mention now explicit examples of these
phenomena.

\begin{example}  Let $X$ be the span of $1, f = e^{i \theta}$, and
$\bar{f}$,
in the continuous functions on the unit circle.  Then $(X,1)$ is an operator system,
but $(X,f)$ is not.  The latter is easily seen since the circle is the
Shilov boundary, so that $C^*_{\rm e}(X)$ is the space of continuous functions on the
circle, and this is a ternary envelope.   However $f X^* f \neq X$,
and so $(X,f)$ is not an operator system by the facts presented
above Example \ref{exC}.
\end{example}

\begin{example}  We describe a selfadjoint  space $X$ of continuous functions
on a compact topological space $K$ (equal to the Shilov boundary
of $X$), with $X$ containing constant functions,
and a unimodular continuous $g$ on $K$, such that
$(X,g)$ is an operator system with unchanged
involution, but
there exists no surjective isometric isomorphism $T : X \to X$ with
$T(1) = g$.

Let $K$ be the topological disjoint union of two
copies of the circle of unit radius centered at $(1,0)$,
and let $g$ be $1$ on the first circle and $-1$ on the other.
Let $f(z) = z$ for any $z$ in either circle.  Let $X = {\rm Span}(\{1, g, f,\bar{f}\})$.
Then the
reader can check, using Proposition \ref{twouv}  and Corollary  \ref{okn}  below,
and the fact that a $*$-isomorphism of $C(K)$ is `composition with
a homeomorphism' $\tau : K \to K$,  that
 $X$ has all the
properties described in the last paragraph.
\end{example}

\begin{proposition} \label{twouv}  Let $X$ be a unital operator space
viewed within its $C^*$-envelope $A = C^*_{\rm e}(X)$.
Suppose that $v$ is a unitary in $X$.
  Then there exists a
surjective complete isometry $T : X \to X$ with $T(1) = v$
if and only if there is a $*$-isomorphism $\theta : A \to A$
such that $v^* X =  \theta(X)$.
\end{proposition}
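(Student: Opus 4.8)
The plan is to transfer everything to the $C^*$-envelope $A = C^*_{\rm e}(X) = {\mathcal T}(X)$ and to exploit the rigidity of the ternary envelope, so that the purely isometric data on $X$ gets converted into algebraic ($*$-automorphism) data on $A$. Throughout I would use that, since $v$ is a unitary in $X$, Lemma \ref{note} makes $v$ a unitary of the $C^*$-algebra $A$, so that $v^*v = vv^* = 1$ and left multiplication $L_v$, $L_{v^*}$ by $v, v^*$ are complete isometries of $A$.

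For the harder (``only if'') direction, suppose $T : X \to X$ is a surjective complete isometry with $T(1) = v$. First I would extend $T$ to the envelope. Applying the universal property of ${\mathcal T}(X)=A$ to the complete isometry $j \circ T^{-1} : X \to A$ (whose range is $j(X)$, which generates $A$ as a TRO) yields a surjective ternary morphism $\tilde T : A \to A$ with $\tilde T|_X = T$; applying the same property to $j \circ T$ gives a ternary morphism $\Psi$ with $\Psi|_X = T^{-1}$, and since $\mathrm{id}_A$ is, by the uniqueness clause of the universal property, the only ternary morphism restricting to the inclusion on $X$, both composites $\Psi \tilde T$ and $\tilde T \Psi$ equal $\mathrm{id}_A$. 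Thus $\tilde T$ is a ternary automorphism of $A$ with $\tilde T(1) = v$.

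Next I would ``correct'' $\tilde T$ to a genuine $*$-automorphism by setting $\theta = L_{v^*} \circ \tilde T$, that is $\theta(a) = v^*\tilde T(a)$. The key point is that the ternary identity turns $\theta$ into a unital $*$-homomorphism once the $C^*$-operations are written as ternary products $ab = \langle a, 1, b\rangle$ and $a^* = \langle 1, a, 1\rangle$: using $\tilde T(1)=v$ and $v^*v=1$ one gets $\theta(1) = v^*v = 1$, then $\theta(ab) = v^*\tilde T(a)\,v^*\tilde T(b) = \theta(a)\theta(b)$, and $\theta(a^*) = v^*\bigl(v\,\tilde T(a)^* v\bigr) = \tilde T(a)^* v = \theta(a)^*$. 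Bijectivity of $\theta$ is automatic, being a composite of bijections, so $\theta$ is a $*$-automorphism of $A$; and finally $\theta(X) = v^*\tilde T(X) = v^* T(X) = v^* X$, as required.

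The converse (``if'') direction is then a direct construction: given a $*$-isomorphism $\theta : A \to A$ with $\theta(X) = v^* X$ (note $\theta(1)=1$ automatically), I would set $T = L_v \circ \theta|_X$, i.e. $T(x) = v\,\theta(x)$. This is a complete isometry as a composite of $\theta$ with multiplication by the unitary $v$; its range is $v\,\theta(X) = vv^*X = X$, and $T(1) = v\,\theta(1) = v$, giving exactly the desired surjective complete isometry. The main obstacle I anticipate is entirely in the forward direction: establishing cleanly that the surjective complete isometry $T$ lifts to a ternary \emph{automorphism} of $A$ (both surjectivity and injectivity flowing from the uniqueness in the universal property), and then verifying that the single correction by $L_{v^*}$ converts this ``unit-shifted'' ternary automorphism into a $*$-automorphism. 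Once that framework is in place, the two ternary-product identities and the converse construction are routine.
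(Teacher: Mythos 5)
Your proof is correct and follows essentially the same route as the paper: extend $T$ to a ternary automorphism $\tilde{T}$ of $A$ via the universal property of the ternary envelope, set $\theta = v^* \tilde{T}(\cdot)$, and take $T = v\,\theta(\cdot)$ for the converse. The only difference is cosmetic: the paper extends $T$ to a surjective complete isometry and cites \cite[Corollary 4.4.6]{BLM} to conclude it is a ternary morphism, whereas you obtain the ternary automorphism directly from the uniqueness clause of the universal property and write out the $*$-isomorphism verifications that the paper leaves as ``easily seen.''
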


 \begin{proof}    For the one direction simply set $T = v \theta(\cdot)$.
Conversely, suppose that  $T : X \to X$ is as stated.
Since the
$C^*$-envelope is a ternary envelope, by universal properties
of the ternary envelope we may extend
$T$ to a surjective complete isometry
$\tilde{T} : C^*_{\rm e}(X) \to C^*_{\rm e}(X)$.  Then
$\tilde{T}$ is a ternary morphism by \cite[Corollary 4.4.6]{BLM}, and
$\theta = v^* \tilde{T}(\cdot)$ is easily seen to be a $*$-isomorphism of
$A$ onto itself.   The rest is obvious.   \end{proof}


{\bf Remark.} Lemma  \ref{mult}
supplies a `linear-metric' sufficient condition for the existence of
a complete isometry $T$ with $T(u) = v$ as in the last result.
Namely, suppose that $(X,u)$ is an operator system, and that $v$ is
another unitary in $X$. Without loss of generality we may take $X =
X^* \subset A = C^*_{\rm e}(X)$ with $1_A = u$. With respect to the
structure in $A$, Lemma  \ref{mult} shows that $v X^* \subset X$ if
and only if any $y \in {\rm Ball}(X)$, there exists an element $z
\in {\rm Ball}(X)$ with $\left| \left|   \left[
\begin{array}{ccl} t 1 & y \\ z & t v
\end{array} \right]  \right| \right| \leq \sqrt{t^2 + 1}$ for all $t
\in \Rdb$.   By symmetry, $X^* v \subset X$ iff a similar condition
to that in the last line holds, but with the $1$-$2$ and the $2$-$1$
entries switched in the matrix. It is easy to see that both of the
conditions in the last two sentences hold simultaneously iff  $v X^*
= X$, that is, iff $v X = X$. In turn, if the latter holds, then
clearly the map $T(x) = vu^*x$ is a complete isometry from $X$ onto
$X$ with $T(u) = v$. Of course the latter forces $(X,v)$ to be an
operator system (and $T$ to be `$*$-linear').



\begin{proposition} \label{twou}
If $X$ is an operator space
with unitaries $u$ and $v$ such that
$(X,u)$ and $(X,v)$ are operator systems,
then the involutions on these two systems are the same
if and only if $u^* v$ is in the center of $Z^* Z$
and equals $v^* u$; where $Z = {\mathcal T}(X)$.
\end{proposition}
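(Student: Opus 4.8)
The plan is to work inside the ternary envelope $Z = {\mathcal T}(X)$, in which, by Lemma \ref{note}, both $u$ and $v$ are unitaries of the TRO $Z$; thus $u u^*$ and $v v^*$ are the unit of the $C^*$-algebra $Z Z^*$, while $u^* u$ and $v^* v$ are the unit of $Z^* Z$. As recalled at the start of Section 4, the involution of the operator system $(X,u)$ is $x \mapsto u x^* u$ and that of $(X,v)$ is $x \mapsto v x^* v$ (these lie in $X$ precisely because the two pairs are operator systems). Hence the first step is the reformulation: the two involutions coincide if and only if $u x^* u = v x^* v$ for every $x \in X$. I then set $w = u^* v$. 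A short computation using the unitarity relations shows that $w$ is a unitary of the $C^*$-algebra $Z^* Z$, that $v = u w$ and $u = v w^*$, and that the claimed conditions ``$u^* v$ is central in $Z^* Z$ and $u^* v = v^* u$'' translate exactly into ``$w$ is central in $Z^* Z$ and $w = w^*$''.

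For the forward direction I would first extract self-adjointness of $w$ almost for free: substituting the particular element $x = v$ into $u x^* u = v x^* v$ gives $u v^* u = v v^* v = v$, and left-multiplying by $u^*$ (using $u^* u = 1$) yields $v^* u = u^* v$, that is $w = w^*$. For a general $x$ I would substitute $v = uw$ into $u x^* u = v x^* v$ and left-multiply by $u^*$, obtaining $w (x^* u) w = x^* u$ for all $x \in X$. Since $w$ is now a self-adjoint unitary, this rearranges to $w (x^* u) = (x^* u) w$; thus $w$ commutes with every element of $X^* u$, and, on taking adjoints (again using $w = w^*$), with every element of $u^* X$.

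To upgrade this to centrality I would invoke that, because $Z$ is generated by $X$ as a TRO, the $C^*$-algebra $Z^* Z$ is generated by the products $\{ x^* y : x, y \in X \}$; writing $x^* y = (x^* u)(u^* y)$ exhibits each generator as a product of an element of $X^* u$ and one of $u^* X$, with both of which $w$ commutes, so $w$ commutes with a generating set and is therefore central. The converse is then a direct verification: if $w = u^* v = v^* u$ is central and self-adjoint, so that $w^2 = 1$, then for any $x$ one computes $v x^* v = u w (x^* u) w = u (x^* u) w^2 = u x^* u$, so the involutions agree.

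The main obstacle I anticipate is not any single hard step but the bookkeeping in $Z$ and $Z^* Z$: confirming that $w$ is a bona fide unitary of $Z^* Z$ with the correct unit, that $x^* u$, $u^* x$ and $x^* y$ all lie in $Z^* Z$, and---most importantly---the generation statement that $\{x^* y\}$ generates $Z^* Z$, which must be read off from $Z$ being the TRO generated by $X$. The conceptual point to get right is why self-adjointness of $w$ is genuinely needed and not merely cosmetic: the relation coming from a general $x$ is the \emph{twisted} identity $w(x^* u)w = x^* u$, which only collapses to honest commutation once $w = w^*$; fortunately that self-adjointness falls out immediately from the single clever substitution $x = v$.
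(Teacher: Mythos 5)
Your proof is correct and follows essentially the same route as the paper's: reduce the statement to $u x^* u = v x^* v$ for all $x \in X$, obtain $u^* v = v^* u$ by substituting a particular element (you use $x = v$, the paper uses $x = u$), show that $u^* v$ commutes with a generating family of $Z^* Z$ built from products $x^* y$ with $x, y \in X$, and verify the converse by the same one-line computation using centrality and $u^* v = v^* u$. Your intermediate steps (the twisted identity $w(x^*u)w = x^*u$ and the factorization $x^* y = (x^* u)(u^* y)$) simply make explicit the ``simple algebra'' and density argument that the paper leaves to the reader.
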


 \begin{proof}  To say that the involutions are the same
is to say that $u x^* u = v x^* v$ for all $x \in X$.
Setting $x = u$ gives $u = v u^* v$, so that $v^* u = u^* v$.
Moreover, it is simple algebra to check that
$x^* y u^* v = u^* v x^* y$ for $x, y \in X$.
Since spans of products of terms of the form $x^* y$ for $x, y \in X$
are dense in $Z^* Z$, we deduce that
$u^* v$ is in the center of $Z^* Z$.

Conversely, suppose that $u^* v = v^* u$ is in the center of $Z^* Z$.
Then for all $x \in Z$,
we have $ux^*u=vv^*ux^*u=vx^*uv^*u=vx^*uu^*v=vx^*v$.
      \end{proof}

\begin{corollary}  \label{okn}  Let $v$ be a unitary  in an operator
subsystem  $X \subset B(H)$.  Then $v \in X_{\rm sa}$  and $v$ is in the center of $C^*_{\rm e}(X)$,
if and only if $(X,v)$ is an
operator system, and the involution associated with $v$
equals the original involution.
\end{corollary}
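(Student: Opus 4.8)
The plan is to read this off Proposition \ref{twou} in the special case where one of the two unitaries is the identity, together with the criterion recalled just above Example \ref{exC} that $(X,v)$ is an operator system precisely when $v X^* v \subset X$ inside ${\mathcal T}(X)$. Throughout I would work in $A = C^*_{\rm e}(X) = {\mathcal T}(X)$, a unital $C^*$-algebra with unit $1$ in which $X$ sits as a selfadjoint subspace containing $1$; by Lemma \ref{note} the unitary $v$ of $X$ is then a unitary of $A$. The involution associated with the unit $1$ is $x \mapsto 1\,x^*\,1 = x^*$, the original one, while the involution associated with $v$ is $x \mapsto v x^* v$. Since $A$ is unital, $Z^* Z = A$ (as $1^* w = w$ for every $w$), so ``central in $Z^* Z$'' means central in $C^*_{\rm e}(X)$.

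For the forward implication I would assume $v = v^* \in X$ and $v$ central in $A$. As $v$ is unitary, $v v^* = 1$, so $v^2 = v v^* = 1$; hence for each $x \in X$ centrality gives $v x^* v = x^* v^2 = x^*$, which lies in $X$ because $X = X^*$. Thus $v X^* v = X^* = X \subseteq X$, so $(X,v)$ is an operator system by the criterion above, and its associated involution $x \mapsto v x^* v = x^*$ is exactly the original involution. This is precisely the right-hand side of the asserted equivalence.

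For the converse I would suppose $(X,v)$ is an operator system whose associated involution equals the original. Then $(X,1)$ and $(X,v)$ are two operator systems on $X$ sharing an involution, so Proposition \ref{twou} applies with $u = 1$: the involutions coincide if and only if $u^* v = v$ lies in the center of $Z^* Z = A$ and equals $v^* u = v^*$. This yields at once that $v = v^* \in X_{\rm sa}$ and that $v$ is central in $C^*_{\rm e}(X)$, which is the left-hand side.

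No genuine obstacle arises: the real content is just unwinding Proposition \ref{twou} in the case $u = 1$, the only points needing care being the identifications $u^* v = v$, $v^* u = v^*$, and $Z^* Z = A$, together with the one-line computation $v x^* v = x^*$ from $v^2 = 1$ and centrality. Should one wish to avoid citing Proposition \ref{twou} in the converse, the same conclusion follows directly: evaluating $v x^* v = x^*$ at $x = v$ forces $v = v^*$ (since $v v^* v = v$), and then $v x^* v = x^*$ forces $v$ to commute with $X^* = X$, hence with the whole $C^*$-algebra $A$ it generates.
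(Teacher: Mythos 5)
Your proposal is correct and follows essentially the same route as the paper: the direction from (operator system with unchanged involution) to ($v = v^* \in X$ central) is read off from Proposition \ref{twou} with $u = 1$, and the converse is the direct computation $v x^* v = x^* v^2 = x^*$ that the paper compresses into ``clearly.'' Your added care in identifying $Z^* Z = C^*_{\rm e}(X)$ (since $1 \in Z$) and your alternative argument avoiding Proposition \ref{twou} are both sound fillings-in of the paper's terse proof, not departures from it.
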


 \begin{proof}    The one direction follows immediately from
Proposition \ref{twou}.  For the other, if $v = v^*$ and $v$ is in
the center of $C^*_{\rm e}(X)$, then
clearly $(X,v)$ is an operator system, and it has unchanged
involution.
      \end{proof}

{\bf Remark.}  Of course saying that the involutions associated with
two unitaries $u$ and $v$ coincide, is equivalent to saying that
every $u$-hermitian is $v$-hermitian.   This is assuming that
$(X,u)$ is an operator system.

%


\medskip


In the remainder of the section, we mention a connection to the
famous characterization due to Choi and Effros of operator systems
\cite{CE} in terms of a given cone  in the space.  We will assume
throughout that we have a fixed cone ${\mathfrak c}$ in $X$, and
that this cone spans $X$ (although this also often follows as a
consequence of some of the conditions imposed below). We allow two
variants of the theory: depending on whether or not we are assuming
the existence of a given fixed involution $*$ on $X$. If the latter
holds, we will assume further that $x = x^*$ for all $x \in
{\mathfrak c}$.

\begin{definition} \label{oos}
By an {\em ordered operator space} below we will mean a pair
$(X,{\mathfrak c})$ consisting of an operator space and a cone
${\mathfrak c}$ in $X$, such that there exists a complete isometry
of $X$ into a $C^*$-algebra $A$  taking ${\mathfrak c}$ into $A_+$.
\end{definition}

\begin{proposition} \label{ord1}
Suppose that $(X,{\mathfrak c})$ is an ordered operator space, and
that $u$ is a unitary in $X$ contained in ${\mathfrak c}$. If
 ${\mathfrak c}$ spans $X$ then  $(X,u)$ is an
operator system. Moreover, in the `involutive variant'
of the theory (mentioned above Definition {\rm \ref{oos}}),
the involution induced by $u$
equals the original involution $*$.
 \end{proposition}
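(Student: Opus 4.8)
The plan is to apply Corollary \ref{elm}: since ${\mathfrak c}$ spans $X$, it suffices to prove that \emph{every} $c \in {\mathfrak c}$ is $u$-hermitian, for then the $u$-hermitians span $X$ and $(X,u)$ is an operator system. By the definition of an ordered operator space I fix a complete isometry $\iota : X \to A$ into a $C^*$-algebra with $\iota({\mathfrak c}) \subseteq A_+$; in particular $\iota(u) \geq 0$, and $\Vert \iota(u)\Vert = 1$ because a unitary in $X$ has norm one.

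The device is to recast $u$-hermitian-ness through numerical ranges and then transport positivity along $\iota$. By standard numerical range theory for a normed space with distinguished norm-one element $u$, the estimate $\Vert u + itc\Vert^2 \leq 1 + Kt^2$ in the Definition of $u$-hermitian is equivalent to the numerical range $\{ f(c) : f \in {\rm Ball}(X^*),\ f(u) = 1\}$ being real. So I fix an arbitrary $f \in {\rm Ball}(X^*)$ with $f(u)=1$ and extend it, by Hahn--Banach, to $\tilde f \in {\rm Ball}(A^*)$, so that $\tilde f(\iota(u)) = f(u) = 1 = \Vert \tilde f\Vert$. Now the crucial step: a norm-one functional on a $C^*$-algebra that attains its norm at a positive element of norm one is a state, whence $\tilde f \geq 0$ on $A$. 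Since $\iota(c) \in A_+$ for every $c \in {\mathfrak c}$, we get $f(c) = \tilde f(\iota(c)) \geq 0$; in particular $f(c) \in \Rdb$ for all such $f$, so $c$ is $u$-hermitian. As ${\mathfrak c}$ spans $X$, the $u$-hermitians span $X$, and Corollary \ref{elm} yields that $(X,u)$ is an operator system.

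For the final assertion I pass to the involutive variant, where $x = x^*$ for all $x \in {\mathfrak c}$, and invoke the earlier corollary of this section stating that $(X,u)$ is an operator system whose involution is $*$ precisely when every $*$-selfadjoint element is $u$-hermitian. Each $c \in {\mathfrak c}$ is $*$-selfadjoint and, by the previous paragraph, $u$-hermitian. Since ${\mathfrak c}$ spans $X$ over $\Cdb$ while lying in the real subspace $\{x : x = x^*\}$, an elementary real-form argument shows its real-linear span is exactly that real subspace; because $\Delta^u_{\rm sa}$ is itself a real subspace, every $*$-selfadjoint element is therefore $u$-hermitian, and the cited corollary completes the proof.

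The main obstacle is the single non-formal step above, namely that the Hahn--Banach extension $\tilde f$ is positive, and this is exactly where the hypothesis $u \in {\mathfrak c}$ (i.e. $\iota(u)\geq 0$) is used decisively. Working naively inside $A$ one would instead try to bound $\Vert \iota(u) + it\,\iota(c)\Vert$ directly; but $\iota(u)$ need not act as a unit in $A$ (indeed it need not even be a unitary of the TRO generated by $\iota(X)$, as simple examples such as $X=\Cdb\,t \subseteq C([0,1])$ show), so such a computation leaves an uncontrolled commutator term $[\iota(u),\iota(c)]$ contributing linearly in $t$, far too weak for the quadratic bound defining $u$-hermitian. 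The positivity of $\iota(u)$ sidesteps this entirely by promoting each norming functional of $u$ to an honest state of $A$, on which the cone $\iota({\mathfrak c}) \subseteq A_+$ is automatically nonnegative.
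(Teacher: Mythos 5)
Your strategy is genuinely different from the paper's own proof, which works in the \emph{ordered ternary envelope} of $X$ (machinery from \cite{BN1}): there the cone of the envelope is ${\mathfrak d}_v$ for an open tripotent $v$, the hypotheses force $u = v$, hence ${\mathfrak c} \subset {\mathfrak d}_u \cap X = \Delta^u_+$, and Corollary \ref{elm} finishes. Most of your more elementary route is sound: the Hahn--Banach extension, and the key lemma that a norm-one functional attaining its norm at a positive norm-one element is positive, are correct (for the lemma: passing to the unitization, $\Vert 2\iota(u) - 1 \Vert \leq 1$ gives $|2 - \tilde f(1)| = |\tilde f(2\iota(u)-1)| \leq 1$, which together with $|\tilde f(1)| \leq 1$ forces $\tilde f(1)=1$, so $\tilde f$ is a state); the reduction via Corollary \ref{elm} and the real-span argument in the involutive variant are also fine.

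However, there is a genuine gap in the sentence asserting that, for a normed space with distinguished norm-one element $u$, the quadratic estimate $\Vert u + itc \Vert^2 \leq 1 + Kt^2$ is \emph{equivalent} to the numerical range $\{ f(c) : f \in {\rm Ball}(X^*), f(u)=1 \}$ being real. Only the forward implication is valid at that level of generality; the reverse implication --- the one you actually use --- is false: realness of the numerical range yields only the first-order statement $\Vert u + itc \Vert = 1 + o(t)$, never a quadratic bound. Concretely, take $X = (\Cdb^2, \Vert \cdot \Vert_p)$ with $1 < p < 2$, $u = (1,0)$, $c = (0,1)$: the dual norm is the $\ell^q$-norm, and $f(u) = f_1 = 1$ with $|f_1|^q + |f_2|^q \leq 1$ forces $f_2 = 0$, so the numerical range of $c$ is $\{0\} \subset \Rdb$; yet $\Vert u + itc \Vert^2 = (1+|t|^p)^{2/p}$ and $\bigl( (1+|t|^p)^{2/p} - 1 \bigr)/t^2 \to \infty$ as $t \to 0$, so no constant $K$ exists and $c$ is not $u$-hermitian. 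Fortunately your own hypotheses repair the step, because $u$ is a \emph{unitary} in $X$: fix a complete isometry $T : X \to B(H)$ with $T(u) = I_H$. For every state $\psi$ of $B(H)$, the functional $\psi \circ T$ lies in ${\rm Ball}(X^*)$ and takes the value $1$ at $u$, so your Hahn--Banach/positivity argument gives $\psi(T(c)) \geq 0$; as $\psi$ was arbitrary, $T(c)$ is a positive operator, and the $C^*$-identity gives $\Vert u + itc \Vert^2 = \Vert I + t^2 T(c)^2 \Vert \leq 1 + t^2 \Vert c \Vert^2$, so $c$ is $u$-hermitian (indeed $u$-positive, matching what the paper proves). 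With that replacement --- i.e.\ using the unitality of $u$ substantively here, not merely to get $\Vert \iota(u) \Vert = 1$ --- your proof is complete and constitutes a more elementary alternative to the paper's envelope argument.
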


\begin{proof}
We use notation and facts from \cite{BN1}.  Consider the ordered
ternary envelope of $X$, whose positive cone is a natural cone
${\mathfrak d}_v$ given by an open tripotent $v$.  Also, ${\mathfrak
c} \subset {\mathfrak d}_v$. Since $u \in {\mathfrak c} \subset
{\mathfrak d}_v$, and $u$ is unitary, it follows that $u = v$. Thus
${\mathfrak c} \subset {\mathfrak d}_u \cap X = \Delta^u_+$.  Since
${\mathfrak c}$ is spanning, this implies by Corollary \ref{elm}
that $(X,u)$ is an operator system.
In the `involutive variant', notice that $u x^* u = x = x^*$ for $x
\in {\mathfrak c}$, and hence for $x \in X$ since ${\mathfrak c}$ is
spanning. This also shows that $u$ is central in the sense of
\cite{BW}  in the ordered ternary envelope of $X$.
\end{proof}




Next, we seek conditions which imply that $(X,u)$ is an operator system
whose cone is precisely ${\mathfrak c}$.   If we also had cones in $M_n(X)$, then
necessary and sufficient conditions for this may be found in
 the famous characterization due to Choi and Effros
of operator systems \cite{CE,Pau}. The most prominent of these
conditions is the existence of a `matricial order unit'. We show
here that the following weaker condition suffices:

\begin{definition}   We say that $u$ is a {\em norm-order unit}
for ${\mathfrak c}$ if $u \in {\mathfrak c}$ and
for every $x \in X_{\rm sa}$ we have $\Vert x \Vert u - x \in {\mathfrak c}$.
\end{definition}

In the `non-involutive space variant' of the theory (see the discussion
above Definition \ref{oos}), we replace
$X_{\rm sa}$ here by the $u$-hermitians on $X$.

\begin{proposition} \label{ord}
Suppose that $(X,{\mathfrak c})$ is an ordered operator space, and
that $u$ is a unitary in $X$ which is a norm-order unit for
${\mathfrak c}$. Then  $(X,u)$ is an operator system whose positive
cone $\Delta^u_+$ is ${\mathfrak c}$.
   \end{proposition}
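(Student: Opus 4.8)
The plan is to establish the two inclusions $\mathfrak{c} \subseteq \Delta^u_+$ and $\Delta^u_+ \subseteq \mathfrak{c}$ separately. The first comes essentially for free from the preceding results, while the second is where the norm-order unit hypothesis does its work, via a short manipulation inside the cone.

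First I would dispose of the inclusion $\mathfrak{c} \subseteq \Delta^u_+$ together with the assertion that $(X,u)$ is an operator system. Since $u$ is a unitary lying in ${\mathfrak c}$ (recall the norm-order unit definition requires $u \in {\mathfrak c}$) and since ${\mathfrak c}$ spans $X$ by our standing assumption on the cone, the hypotheses of Proposition \ref{ord1} are met. That proposition then tells us directly that $(X,u)$ is an operator system, and its proof in fact shows ${\mathfrak c} \subseteq {\mathfrak d}_u \cap X = \Delta^u_+$. In the `involutive variant' the same proposition gives that the induced involution agrees with the given $*$, so that the $u$-hermitians coincide with $X_{\rm sa}$; I would record this at the outset so that the two variants can be handled by one argument. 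Thus only the reverse inclusion $\Delta^u_+ \subseteq {\mathfrak c}$ remains.

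The crux is this reverse inclusion. Fix $x \in \Delta^u_+$; by Proposition \ref{uospa} this means $x$ is $u$-positive, i.e.\ $x$ is $u$-hermitian and $\Vert \Vert x \Vert u - x \Vert \leq \Vert x \Vert$. Set $r = \Vert x \Vert$ and $y = r u - x$. Since $u$ is itself $u$-hermitian (as $\Vert u + i t u \Vert^2 = 1 + t^2$) and the $u$-hermitians form the real subspace $\Delta^u_{\rm sa}$ by Proposition \ref{uospa}, the element $y$ is again $u$-hermitian (and lies in $X_{\rm sa}$ in the involutive variant, by the identification noted above), with $\Vert y \Vert \leq r$. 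Hence $y$ belongs to the class of elements to which the norm-order unit hypothesis applies, giving $\Vert y \Vert u - y \in {\mathfrak c}$. Now $\Vert y \Vert u - y = (\Vert y \Vert - r) u + x = x - c u$, where $c := r - \Vert y \Vert \geq 0$ precisely because of $u$-positivity. Since $cu \in {\mathfrak c}$ (as $u \in {\mathfrak c}$, $c \geq 0$) and ${\mathfrak c}$ is a cone, we conclude $x = (x - cu) + cu \in {\mathfrak c}$. This yields $\Delta^u_+ \subseteq {\mathfrak c}$, and combined with the first step gives $\Delta^u_+ = {\mathfrak c}$.

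I expect the only genuinely delicate point to be the bookkeeping between the two variants: to invoke the norm-order unit hypothesis on $y = \Vert x \Vert u - x$ I must know that $y$ lies in $X_{\rm sa}$ (involutive case) or among the $u$-hermitians (non-involutive case), and it is exactly the identification of the induced involution with $*$ from Proposition \ref{ord1} that secures this in the involutive case. Everything else is purely formal, using only that ${\mathfrak c}$ is a convex cone; in particular no closedness or Archimedean assumption on ${\mathfrak c}$ is required, since only finite sums appear. The one idea carrying the argument is the substitution $y = \Vert x\Vert u - x$, whose defining feature is that $u$-positivity forces the scalar $c$ to be nonnegative.
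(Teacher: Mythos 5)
Your proposal is correct and is essentially the paper's own argument: both reduce to showing $\Delta^u_+ \subseteq {\mathfrak c}$ via Proposition \ref{ord1}, and both rest on the same decomposition of $x$ as a nonnegative multiple of $u$ plus an element of ${\mathfrak c}$ supplied by the norm-order unit hypothesis, with $u$-positivity forcing the scalar to be nonnegative (your $x = cu + (\Vert y \Vert u - y)$ with $y = \Vert x \Vert u - x$ is literally the paper's $x = (t - \Vert c \Vert)u + (\Vert c \Vert u - c)$ with $c = tu - x$). The only cosmetic differences are that you invoke the hypothesis once, on $y = \Vert x \Vert u - x$ (legitimized via the real-linearity of $\Delta^u_{\rm sa}$ from Proposition \ref{uospa} and the identification of the involutions from Proposition \ref{ord1}), where the paper invokes it twice (on $x$ to get $c \in {\mathfrak c}$, then on $c$), and that you obtain the bound $\Vert y \Vert \leq \Vert x \Vert$ directly from the definition of $u$-positivity rather than from the order estimate $tu - x \leq tu$ in the $C^*$-algebra $Z''_2(u)$.
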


\begin{proof}
We saw in Proposition \ref{ord1} that  $(X,u)$ is an operator
system, ${\mathfrak c} \subset {\mathfrak d}_u$, and that in the
`involutive space variant' of the theory the involution is
unambiguous.   If $x \in {\mathfrak d}_u \cap X = \Delta^u_+$ then $x$
is $u$-positive.  If $t = \Vert x \Vert$, then  $t u - x \in
{\mathfrak c}$. Hence $t u - x = c$ for some $c \in {\mathfrak c}$.
Viewed within the $C^*$-algebra $Z''_2(u)$
(notation as in \cite{BN1}), we have $\Vert c \Vert =
\Vert t u - x \Vert \leq t$, since $t u - x \leq t u$ in $Z''_2(u)$.
Thus $x = t u - c = (t - \Vert c \Vert )u + (\Vert c \Vert u - c)
\in {\mathfrak c}$. So $\Delta^u_+ = {\mathfrak c}$.
\end{proof}


\begin{corollary} \label{ordu}  Suppose that $(X,u)$ is an operator system and
that ${\mathfrak c}$ is a subcone of $\Delta^u_+$, such that
$u$ is a norm-order unit for ${\mathfrak c}$. Then $\Delta^u_+ =
{\mathfrak c}$.
 \end{corollary}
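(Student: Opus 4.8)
The plan is to deduce this immediately from Proposition \ref{ord}, whose hypotheses are almost verbatim what we are handed; the only real work is to check that the pair $(X,{\mathfrak c})$ qualifies as an ordered operator space in the sense of Definition \ref{oos}. First I would observe that, since $(X,u)$ is by assumption an operator system, there is a complete isometry $T : X \to A$ into a $C^*$-algebra $A$ with $T(u) = 1_A$, and under such a map the canonical cone $\Delta^u_+$ is carried into $A_+$ (this is exactly the positivity built into the operator system structure). Because ${\mathfrak c} \subset \Delta^u_+$ by hypothesis, we get $T({\mathfrak c}) \subset A_+$ as well, so $(X,{\mathfrak c})$ is an ordered operator space.

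Next I would note that the remaining hypotheses of Proposition \ref{ord} are already given: $u$ is a unitary in $X$ (part of the operator system data), and $u$ is by assumption a norm-order unit for ${\mathfrak c}$. (In particular $u \in {\mathfrak c}$, and since $\Vert x \Vert u - x \in {\mathfrak c}$ and $\Vert x \Vert u + x \in {\mathfrak c}$ for every $x \in X_{\rm sa}$, the cone ${\mathfrak c}$ automatically spans $X$, so no spanning condition need be separately verified.) Applying Proposition \ref{ord} then yields that $(X,u)$ is an operator system whose positive cone $\Delta^u_+$ equals ${\mathfrak c}$.

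The one point requiring a word of care is that the cone $\Delta^u_+$ produced by Proposition \ref{ord} is the same object as the cone $\Delta^u_+$ of the operator system $(X,u)$ we started with. This is precisely the content of the earlier discussion that $\Delta^u$, its involution, and its positive cone $\Delta^u_+$ depend only on the unit $u$ and not on any particular representation; consequently there is no ambiguity, and the equality $\Delta^u_+ = {\mathfrak c}$ delivered by Proposition \ref{ord} is exactly the conclusion sought. I do not anticipate a genuine obstacle here: the entire argument is a matter of recognizing that an operator system with a subcone of its positive cone is an instance of the ordered-operator-space setup, so that Proposition \ref{ord} applies directly.
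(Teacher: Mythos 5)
Your proposal is correct and is essentially the paper's own proof: the paper also simply observes that $(X,{\mathfrak c})$ is an ordered operator space and invokes Proposition \ref{ord}. You merely spell out the details (the embedding witnessing the ordered operator space structure, the automatic spanning of ${\mathfrak c}$, and the representation-independence of $\Delta^u_+$) that the paper leaves implicit.
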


 \begin{proof}
Clearly $(X,{\mathfrak c})$ is an ordered operator space, and we are in the
situation of Proposition \ref{ord}.
\end{proof}


{\bf Remark.}   In the previous context, the range tripotent of an
order unit can be shown to be unitary. Hence an order unit which is
a partial isometry is unitary.

\section{Function spaces}

By a {\em function space} we will mean a closed subspace of a
commutative $C^*$-algebra.   Abstractly, these are just the Banach
spaces, or equivalently the operator spaces which have the `Min'
structure (eg.\ see 1.2.21 in \cite{BLM}). By a {\em selfadjoint
function space} we will mean a closed selfadjoint subspace of a
commutative $C^*$-algebra. By a {\em unitary in} $X$, where $X$ is a
Banach space, we will mean an element $g \in X$ such that there
exists a linear isometry $T : X \to C(K)$, for a compact Hausdorff
space $K$, with $T(g) = 1$.   We shall show below that there is no
conflict with the earlier definition of a unitary in $X$.  There is
however a conflict with the notation in \cite{Jar}.  Indeed, if $K$
is the Shilov boundary of a unital function space $X$, then
unitaries in $X$ in our sense are just the elements in $X$ which are
unimodular on $K$.
 A {\em unital function space} is a pair $(X,g)$ where
$g$ is a unitary in the Banach space $X$.  By replacing $C(K)$ by
the $C^*$-algebra generated by $T(X)$ we may assume that $T(X)$
separates points of $K$, a common assumption in the function theory
literature. Indeed, unital function spaces may be taken as the
`basic setting' for the presentation of what some might call the
`classical Shilov boundary', as is explained in Section 4.1 of
\cite{BLM}.   Although it is not very difficult, we are not aware of
any abstract characterization of unital function spaces in the
literature until now.

\begin{theorem}  \label{fsp}  Let $g$ be an element in a Banach space $X$.
The following are equivalent:
\begin{itemize}
\item [(i)]  $g$ is a unitary in $X$ in the sense above.
\item [(ii)]   $g$ is a unitary in ${\rm Min}(X)$
in the sense of the introduction of our paper.
\item [(iii)]  $\sup \{ \Vert s f + t g \Vert :
s, t \in \Cdb, |s|^2 + |t|^2 = 1 \} = \sqrt{2}$ for
every $f \in X$ with $\Vert f  \Vert = 1$.
\end{itemize}
Thus $(X,g)$ is  a unital function space if and only if {\rm (iii)} holds.
\end{theorem}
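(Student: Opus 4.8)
The plan is to establish (i) $\Leftrightarrow$ (iii) by an elementary function-theoretic argument, and to fold in (ii) using the functoriality of the Min operator space structure together with the $n=1$ case of Theorem \ref{main}; the concluding sentence is then immediate, since (i) is precisely the definition of $(X,g)$ being a unital function space. Throughout I would use $\|h\| = \sup\{|\phi(h)| : \phi \in {\rm Ball}(X^*)\}$ and work on the weak* compact set ${\rm Ball}(X^*)$. The computational backbone is the scalar identity $\sup\{|sa + tb| : |s|^2 + |t|^2 = 1\} = \sqrt{|a|^2 + |b|^2}$ for $a,b \in \Cdb$, which rewrites the supremum in (iii) as $\sup\{ \sqrt{|\phi(f)|^2 + |\phi(g)|^2} : \phi \in {\rm Ball}(X^*)\}$.

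For (i) $\Rightarrow$ (iii): given an isometry $T : X \to C(K)$ with $T(g) = 1$, I would compute for $\|f\| = 1$, applying the scalar identity pointwise on $K$, that $\sup\{\|sf+tg\| : |s|^2+|t|^2=1\} = \sup_{k} \sqrt{|T(f)(k)|^2 + 1} = \sqrt{1 + \|f\|^2} = \sqrt{2}$. A preliminary observation needed for the converse is that (iii) forces $\|g\| = 1$: choosing $\phi_0 \in {\rm Ball}(X^*)$ attaining $|\phi_0(g)| = \|g\|$ (by weak* compactness) one sees $\|\phi_0\| = 1$, and then (iii) at a unit vector $f$ gives $|\phi_0(f)|^2 + \|g\|^2 \leq 2$; taking the supremum over such $f$ yields $1 + \|g\|^2 \leq 2$, while the bound $\|g\| \geq 1$ drops out of (iii) directly.

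The heart of the proof is (iii) $\Rightarrow$ (i). Here I would set $K_0 = \{ \phi \in {\rm Ball}(X^*) : |\phi(g)| = 1\}$, a nonempty weak* compact set, and define $T : X \to C(K_0)$ by $T(f)(\phi) = \overline{\phi(g)}\, \phi(f)$; weak* continuity makes each $T(f)$ lie in $C(K_0)$, and $T$ is linear, contractive, with $T(g) = 1$. The crux is showing $T$ is isometric, i.e.\ that $K_0$ norms $X$. Fixing $\|f\| = 1$, the rewritten form of (iii) supplies a net $\phi_\alpha$ with $|\phi_\alpha(f)|^2 + |\phi_\alpha(g)|^2 \to 2$; since $\|g\| = 1$ both summands are $\leq 1$, forcing $|\phi_\alpha(f)| \to 1$ and $|\phi_\alpha(g)| \to 1$. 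A weak* cluster point $\phi_*$ then satisfies $|\phi_*(g)| = 1$ (so $\phi_* \in K_0$) and $|\phi_*(f)| = 1 = \|f\|$. This extraction of a single functional simultaneously norming for $f$ and unimodular on $g$ is the step I expect to be the main obstacle, and it is exactly where weak* compactness of ${\rm Ball}(X^*)$ is essential.

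Finally, for the operator-space leg I would argue (i) $\Rightarrow$ (ii) $\Rightarrow$ (iii). For (i) $\Rightarrow$ (ii), functoriality of Min turns the Banach isometry $T : X \to C(K)$ into a complete isometry ${\rm Min}(X) \to {\rm Min}(C(K)) = C(K)$; representing $C(K) \subseteq B(H)$ with $1 \mapsto I_H$ exhibits $g$ as a unitary in ${\rm Min}(X)$ in the sense of Theorem \ref{main}. For (ii) $\Rightarrow$ (iii), I would take the $n=1$ row condition of Theorem \ref{main}, namely $\|[ g \; f ]\| = \sqrt 2$ for unit $f$, and identify its value, computed in the Min matrix norm as $\sup\{\|[\phi(g) \; \phi(f)]\| : \phi \in {\rm Ball}(X^*)\} = \sup\{\sqrt{|\phi(g)|^2 + |\phi(f)|^2}\}$, with the supremum in (iii). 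Combined with (i) $\Leftrightarrow$ (iii) this closes the loop, giving all three equivalences and hence the final assertion.
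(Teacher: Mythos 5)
Your proof is correct, but it takes a genuinely different route from the paper on the hard implication. The paper proves the cycle (i) $\Leftrightarrow$ (ii) together with (iii) $\Rightarrow$ (ii), leaving (i) $\Rightarrow$ (iii) as an exercise, and its substantive steps run through the noncommutative machinery of Section 2: for (iii) $\Rightarrow$ (ii) it identifies the row norm $\Vert [ f \; \; g ] \Vert$ in ${\rm Min}(X)$ with the supremum in (iii) and then follows the proof of Theorem \ref{uosp} (the multiplication operator $L$, essentiality of the ternary envelope, Lemma \ref{note}); for (ii) $\Rightarrow$ (i) it uses that the injective envelope, hence the ternary envelope $Z$, of ${\rm Min}(X)$ is a minimal operator space, so that $Z^* Z$ is commutative and $x \mapsto g^* x$ is a unital isometry into a commutative $C^*$-algebra. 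You instead prove (iii) $\Rightarrow$ (i) directly and elementarily, via $K_0 = \{ \phi \in {\rm Ball}(X^*) : |\phi(g)| = 1 \}$ and $T(f)(\phi) = \overline{\phi(g)} \, \phi(f)$, with weak* compactness producing a functional simultaneously unimodular at $g$ and norming for $f$; this is exactly the ``classical techniques'' proof that the paper explicitly acknowledges and deliberately bypasses (``instead we will deduce it from our noncommutative results''). You then close the loop with (ii) $\Rightarrow$ (iii), using the easy direction of Theorem \ref{main} plus the same row-norm identity, while your (i) $\Rightarrow$ (ii) coincides with the paper's (isometries between minimal operator spaces are completely isometric). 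Your route buys self-containedness: no injective or ternary envelopes and no appeal to Theorem \ref{uosp}, only Banach space duality and the trivial direction of Theorem \ref{main}. The paper's route buys a direct proof of (ii) $\Rightarrow$ (i) carrying structural information (the unital isometry $x \mapsto g^* x$ into the commutative $C^*$-algebra $Z^* Z$), and exhibits the commutative result as a corollary of the noncommutative theory developed earlier. Two one-line points you should make explicit when writing this up: $g \neq 0$ (immediate from (iii)) before invoking a norming functional, and $K_0$ is weak* closed in ${\rm Ball}(X^*)$, hence compact and Hausdorff, so $C(K_0)$ makes sense.
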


\begin{proof}   (i) $\Rightarrow$ (ii) \  This follows because
every isometry between minimal operator spaces
is a complete isometry (see e.g.\ 1.2.21 in \cite{BLM}).

(ii) $\Rightarrow$ (i) \ By 4.2.11 in \cite{BLM} the injective envelope, and hence the
ternary envelope $Z$, of ${\rm Min}(X)$, is a minimal operator space.
Hence $Z^* Z$ is commutative by  \cite[Proposition 8.6.5]{BLM}.
Since $gX$ is unitary in $Z$ by Lemma
\ref{note} (iii),  the map $x \mapsto g^* x$ is a `unital isometry'
from $X$ into the commutative $C^*$-algebra $Z^* Z$.

(iii) $\Rightarrow$ (ii) \ One may prove that (iii) $\Rightarrow$
(i) using classical techniques, but instead we will deduce it from
our noncommutative results.  It is well known (and an easy
exercise), that the norm of $[ f \; \; g ]$, as a row with entries
in ${\rm Min}(X)$, is $\sup \{ \Vert s f + t g \Vert : s, t \in
\Cdb, |s|^2 + |t|^2 = 1 \}$, and a similar statement holds for
columns.  Following the proof of Theorem \ref{uosp} we see that the
operator $L$ there is an isometry. Hence it is a complete isometry
by facts mentioned in the last two paragraphs, and as in the other
proof this implies $u u^* = 1$ in $Z^* Z$.   Similarly, $u^* u = 1$.
That is, $u$ is a unitary in $Z$, and we can apply Lemma \ref{note}.

That (i) implies (iii) is left as an exercise.
 \end{proof}

By a {\em function system} we will mean a closed selfadjoint subspace of $C(K)$,
for compact $K$, containing constant functions.  There is
an obvious `abstract definition': $(X,g)$ is a function system if there
exists an isometry $T : X \to C(K)$ with $T(g) = 1$ and $T(X)$ selfadjoint.
 It is easy to see that this
 is equivalent to saying that   $({\rm Min}(X),g)$ is an operator system.
See also \cite{PT}. Many of the results in earlier sections
concerning operator systems have `function system' analogues.  For
example, function systems are the obviously the unital function
spaces $(X,g)$ which are spanned by their $g$-hermitians.  The
following is another characterization of function systems,  which
also improves on Corollary \ref{okn} in our present situation:

\begin{theorem}  \label{okn2}  Let $X$ be a selfadjoint function space (defined above).
If  $v$ is any unitary in $X$ with $v = v^*$
then $(X,v)$ is a function system
and the involution associated with $v$ equals the original involution.
\end{theorem}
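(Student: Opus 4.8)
The plan is to produce, concretely, the isometry witnessing that $(X,v)$ is a function system, by representing $X$ selfadjointly on its Shilov boundary and then multiplying by $\bar v$. First I would fix a completely isometric selfadjoint embedding $X \subseteq C(K)$ and restrict to the Shilov boundary, so I may assume $K$ is the Shilov boundary of $X$; then $X$ has no common zeros on $K$ (a common zero of all of $X$ could not lie in the Shilov boundary). Since the involution on $X$ is the restriction of complex conjugation and $v = v^*$, the function $v$ is real-valued on $K$. The key preliminary step is to show $|v| \equiv 1$ on $K$. For this I would invoke Lemma \ref{note}: as $v$ is a unitary in $X$ it is a unitary in $Z = {\mathcal T}(X)$, which for a function space may be realized as the subTRO of $C(K)$ generated by $X$. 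The unitarity relation $z\,v^* v = z$ for all $z \in Z$ reads pointwise as $|v|^2 z = z$, and evaluating at any point of $K$ where some $z$ is nonzero forces $|v|^2 = 1$ there; by the no-common-zeros property this gives $|v| \equiv 1$, and hence $v^2 = 1$ in $C(K)$.

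With $|v| = 1$ in hand, I would consider the multiplication operator $M_v : f \mapsto v f$ on $C(K)$. It is a surjective linear isometry with inverse $M_{\bar v} = M_v$, it satisfies $M_v(v) = v^2 = 1$, and since $v$ is real it intertwines conjugation: $\overline{M_v(f)} = v\bar f = M_v(\bar f)$. Because $X$ is selfadjoint, $M_v(X)$ is therefore a selfadjoint subspace of $C(K)$ containing the constant function $1 = M_v(v)$, so $M_v|_X : X \to C(K)$ exhibits $(X,v)$ as a function system. For the assertion about the involution I would use that the involution associated with $v$ is intrinsic, depending only on $v$ as recalled in Section 3, and so may be computed in this concrete representation as the pullback under $M_v$ of complex conjugation, namely $x \mapsto M_v^{-1}(\overline{M_v(x)}) = M_v^{-1}(v\bar x) = v(v\bar x) = v^2 \bar x = \bar x$. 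Thus the associated involution is the original conjugation, as required. Equivalently, one may verify this through the description of the involution as $x \mapsto v x^* v$ in ${\mathcal T}(X)$ from Section 4, which in $C(K)$ equals $v\bar x v = v^2 \bar x = \bar x$; this simultaneously yields $v X^* v = X$ (the operator-system condition) and identifies the involution.

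The main obstacle is the linking step $|v| \equiv 1$: one must pass correctly from the abstract notion of a unitary in $X$ to genuine unimodularity of $v$ as a function, which is exactly where the selfadjoint-function-space hypothesis and the reduction to the Shilov boundary are used, and where the improvement over Corollary \ref{okn} occurs, since commutativity of $C(K)$ renders the centrality hypothesis there automatic. I would take care to justify that $Z = {\mathcal T}(X)$ is faithfully carried by the Shilov-boundary realization, so that relations such as $z\,v^* v = z$ may indeed be read pointwise, citing the function-space theory of Section 4.1 of \cite{BLM}; everything else is a routine pointwise computation.
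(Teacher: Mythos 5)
Your overall strategy---turn $v$ into the constant $1$ by multiplying by its conjugate inside a commutative realization, then check $*$-linearity---is exactly the spirit of the paper's proof, and the second half of your argument (the operator $M_v$, selfadjointness of $vX$, and the computation $v\bar{x}v = v^2\bar{x} = \bar{x}$ identifying the involution) is fine once $|v|\equiv 1$ is secured. But the linking step you yourself flag as the main obstacle is a genuine gap, not a routine citation. You need two unproved facts: (a) that a selfadjoint function space which need not contain constants has a Shilov boundary (a \emph{minimal} closed norming subset), and (b) that restricting to it realizes ${\mathcal T}(X)$ faithfully as the subTRO of $C(\partial X)$ generated by $X$, so that the unitarity relation $z\,\bar{v}v = z$ from Lemma \ref{note}(iii) can be read pointwise. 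Section 4.1 of \cite{BLM}, which you cite, treats the classical Shilov boundary only for \emph{unital} function spaces; for non-unital selfadjoint function spaces neither (a) nor (b) is off-the-shelf, and (b) amounts to redoing a commutative version of Hamana's essentiality theory. The danger is real rather than cosmetic: in a non-minimal representation, pointwise unimodularity simply fails. For instance, embed $X = C([0,1])$ into $C([0,1]\sqcup\{w\})$ by $f \mapsto (f, f(0)/2)$; this is an isometric selfadjoint embedding, $v = 1$ is a selfadjoint unitary in $X$, yet $|v(w)| = 1/2$. So without (a) and (b) the engine of your whole argument, $|v|\equiv 1$, is unproved. (A smaller wrinkle: your ``no common zeros on the Shilov boundary'' claim needs care at non-isolated points; one can only conclude the common zero set has empty interior, which fortunately still yields $|v|\equiv 1$ by density and continuity---but only after (a) and (b) are in place.)

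The paper avoids this entirely by never passing to a concrete minimal function model. Since $X = X^*$, the TRO generated by $X$ in $C(K)$ is a $*$-subTRO, and it follows that the ternary $*$-envelope $Z$ of $X$ in the sense of \cite{BNW} is commutative ($xy = yx$ for $x,y \in Z$). Then $v$ is unitary in $Z$ by Lemma \ref{note}, $C = v^*Z$ is a commutative unital $C^*$-algebra, and $T(x) = v^*x$ is an isometric unital map which is $*$-linear because $v = v^*$; alternatively, one line suffices: $vx^*v = x^*vv^* = x^*$ for $x \in X$, which simultaneously gives the operator-system condition and the unchanged involution. The moral difference is that the paper's abstract route needs no injectivity of any concrete representation---commutativity is inherited through the $*$-TRO structure---whereas your concrete route stands or falls on the faithfulness of the Shilov-boundary realization, which is precisely the missing ingredient. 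If you want to salvage your pointwise picture, prove (or cite) the commutativity of the ternary $*$-envelope first and work there; as written, the proof has a hole at its central step.
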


 \begin{proof}
Let $X$ be a selfadjoint subspace of $C(K)$, for compact $K$.
The TRO generated by $X$ in $C(K)$ is a $*$-subTRO of $C(K)$, and it
follows that the `ternary $*$-envelope' $Z$ of $X$ (see \cite{BNW}) is `commutative': that
is $x y = yx$ for all $x, y \in Z$.    Also $C = v^* Z$ is a commutative unital $C^*$-algebra,
and $T(x) = v^* x$ is an isometric `unital' map into $C$.  Since $X$ is a
a selfadjoint subspace of $Z$  it is easy to see that $T$ is `$*$-linear',
and the rest is obvious.  Alternatively,
note that $v x^* v = x^* v v^* = x^*$ for $x \in X$.
\end{proof}

The class of selfadjoint function spaces (not necessarily with a unit),
which  appear in the last result,
 may be characterized abstractly, although again we have
not seen this in the literature.   For example, we have the
following characterization in terms of `selfadjoint functionals', by
which we mean that $\varphi(x^*) = \overline{\varphi(x)}$ for $x \in
X$.  We remark that this result unfortunately violates our principle
of not using maps or functionals on the space; perhaps a better
characterization will be forthcoming.

\begin{proposition} Selfadjoint function spaces are precisely the Banach spaces
$X$ with an involution $*$ having the property that every extreme
point of ${\rm Ball}(X^*)$ is a scalar multiple of a selfadjoint
functional.
\end{proposition}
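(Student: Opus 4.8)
The plan is to prove both inclusions directly: the forward one via the Arens--Kelley description of the extreme points of the dual ball of a $C(K)$, and the converse by building an explicit isometric embedding out of only the \emph{selfadjoint} extreme functionals.

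For the forward direction, suppose $X$ is a closed selfadjoint subspace of $C(K)$ with involution $f \mapsto \bar f$; I may take $K$ compact by passing to a unitization of the ambient commutative $C^*$-algebra. Let $\varphi$ be an extreme point of ${\rm Ball}(X^*)$. The restriction map $r : C(K)^* \to X^*$ carries ${\rm Ball}(C(K)^*)$ onto ${\rm Ball}(X^*)$ by Hahn--Banach, and $F = r^{-1}(\varphi) \cap {\rm Ball}(C(K)^*)$ is a nonempty weak$^*$-compact convex \emph{face}: if $\mu \in F$ is the midpoint of two ball elements $\mu_1,\mu_2$, then $\varphi = \tfrac12(r\mu_1 + r\mu_2)$ forces $r\mu_1 = r\mu_2 = \varphi$ since $\varphi$ is extreme, so $\mu_1,\mu_2 \in F$. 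By Krein--Milman $F$ has an extreme point, which is then extreme in ${\rm Ball}(C(K)^*)$ and hence, by Arens--Kelley, equal to $\lambda\delta_t$ for some $t \in K$ and $|\lambda| = 1$. Restricting gives $\varphi = \lambda\,(\delta_t|_X)$, and $\delta_t|_X$ is selfadjoint since $\delta_t(\bar f) = \overline{f(t)} = \overline{\delta_t(f)}$. Thus every extreme point is a scalar multiple of a selfadjoint functional.

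For the converse, let $X$ carry an involution $*$ with the stated extremal property, and let $E$ be the set of extreme points of ${\rm Ball}(X^*)$. For each $e \in E$ write $e = \lambda_e \eta_e$ with $\lambda_e$ unimodular and $\eta_e$ selfadjoint of norm one (factoring $\lambda_e=|\lambda_e|e^{i\theta}$ and absorbing $|\lambda_e|$ into the selfadjoint factor). Put $Q = \{\eta_e : e \in E\}$ and $K = \overline{Q}^{\,w*}$, the weak$^*$-closure of $Q$ inside the dual ball; then $K$ is compact Hausdorff and nonempty once $X \neq 0$. Since each condition $\psi(x^*) = \overline{\psi(x)}$ is weak$^*$-closed, every $\psi \in K$ is selfadjoint, so for the canonical map $T : X \to C(K)$, $T(x)(\psi) = \psi(x)$, we get $\overline{T(x)}(\psi) = \overline{\psi(x)} = \psi(x^*) = T(x^*)(\psi)$, i.e. $T(x^*) = \overline{T(x)}$ and $T(X)$ is selfadjoint. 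Moreover $T$ is isometric: $\varphi \mapsto |\varphi(x)|$ is weak$^*$-continuous and convex on the dual ball, so by Bauer's maximum principle $\|x\| = \sup_{e\in E}|e(x)| = \sup_{e\in E}|\eta_e(x)| = \sup_{\eta\in Q}|\eta(x)| \le \sup_{\psi\in K}|\psi(x)| \le \|x\|$. Hence $T$ is an isometry onto a selfadjoint subspace of the commutative $C^*$-algebra $C(K)$, and $X$ is a selfadjoint function space.

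The only delicate point is the converse. One must resist embedding $X$ into $C$ of the weak$^*$-closure of \emph{all} of $E$, because the unimodular twists $\lambda_e$ then obstruct the identity $\overline{T(x)} = T(x^*)$. What rescues the restricted construction is that discarding the twists costs nothing in norm, as $|\lambda_e\eta_e(x)| = |\eta_e(x)|$ so the selfadjoint functionals already norm $X$, and that selfadjointness is preserved under weak$^*$-limits, keeping $K$ inside the selfadjoint functionals. I would also verify the harmless normalization letting me take each scalar unimodular and each $\eta_e$ of norm one, and note that the isometry of $*$ then comes for free from $\|x^*\| = \|\overline{T(x)}\| = \|T(x)\| = \|x\|$.
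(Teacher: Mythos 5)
Your proof is correct and follows essentially the same route as the paper: the forward direction is the paper's unitization--Krein--Milman face argument plus the Arens--Kelley identification of extreme points of ${\rm Ball}(C(K)^*)$ as unimodular multiples of point evaluations, and the converse is the paper's evaluation embedding into the continuous functions on a weak* compact set of selfadjoint functionals (the paper uses all selfadjoint elements of ${\rm Ball}(X^*)$, you use the slightly smaller weak* closure of the normalized selfadjoint factors of the extreme points --- an immaterial variation). Your write-up merely supplies the details the paper labels ``clearly'' and ``routine,'' notably the Bauer maximum principle step showing the selfadjoint functionals already norm $X$ and the positive-scalar normalization of the factorization $e=\lambda_e\eta_e$.
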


\begin{proof}  Any $X$ with the announced property is clearly
$*$-linearly isometric to a selfadjoint subspace of the continuous
functions on the weak* compact set of selfadjoint elements in Ball$(X^*)$.
For the converse, we may suppose that
 we have a selfadjoint subspace $X \subset C(K)$, with $K$
compact (by replacing the commutative $C^*$-algebra with its
unitization). By the routine Krein-Milman type argument, any extreme
point for Ball$(X^*)$ is the restriction of an extreme point for
Ball$(C(K)^*)$. Hence it is of the form $\chi \delta_w$, where $\chi
\in \Cdb$ and $\delta_w$ is point evaluation at $w \in K$. It is
clear that $\delta_w$ is selfadjoint.
  \end{proof}

\medskip

{\bf Closing remark.}   Having an abstract characterization of a class
of objects is often useful in order to show that the class is closed
under the usual shopping list of `constructions', such as direct sums,
certain quotients, tensor products,  ultraproducts, interpolation,
etc.  In our case one may certainly do this, but we will not
do so here, for the reason that all of these can seemingly be done
without appealing to our new characterizations.   For example, as C. K. Ng has
suggested to us privately, one may prove their result from
\cite{HN} about quotients by $M$-ideals using our criteria too.
 We mention a third
way to prove this result: if $X$ is a unital operator space (resp.\
operator system) sitting in its $C^*$-envelope $A$, then by basic
facts about $M$-summands from e.g.\ \cite[Section 4.8]{BLM} and
references therein, we may view a complete $M$-projection on $X$ as
a projection $p$ in the center of $A$ which is also in $X$.  Thus $p
X$ is a unital subspace (resp.\ operator subsystem) of $p A p$. This
does the $M$-summand case, and the $M$-ideal case follows by the
idea in \cite{HN} of going to the second dual.

\medskip

{\bf Acknowledgement.}  The last section of the paper was written 
shortly after the first version of our paper was distributed.
We have also added several references (see e.g.\ the discussion at the
end of the introduction).
  




\end{document}